\documentclass[11pt]{amsart}

\usepackage[T1]{fontenc}
\usepackage[utf8]{inputenc}
\usepackage{lmodern}
\usepackage{amsmath,amssymb,amsthm,mathtools}

\mathtoolsset{showonlyrefs=true}

\usepackage[margin=1.1in]{geometry}
\usepackage{microtype}
\usepackage{enumitem}
\usepackage[colorlinks=true,
linkcolor=blue,
citecolor=blue,
urlcolor=blue]{hyperref}

\newtheorem{theorem}{Theorem}[section]
\newtheorem{lemma}[theorem]{Lemma}
\newtheorem{proposition}[theorem]{Proposition}

\theoremstyle{definition}
\newtheorem{definition}[theorem]{Definition}
\theoremstyle{remark}
\newtheorem{remark}[theorem]{Remark}

\newcommand{\R}{\mathbb{R}}
\newcommand{\cE}{\mathcal{E}}
\newcommand{\cJ}{\mathcal{J}}
\newcommand{\dd}{\mathrm{d}}
\newcommand{\im}{\operatorname{im}}

\title[Topology and regularity of small sub-Riemannian balls]{Topology and regularity of small sub-Riemannian balls under an index condition}

\author{Luca Rizzi}
\address[Rizzi]{SISSA, Via Bonomea 265, 34136 Trieste (IT)}
\email[Rizzi]{lrizzi@sissa.it}

\author{Giuditta Rovelli}
\address[Rovelli]{Université Paris-Saclay, CNRS, CentraleSupélec, Laboratoire des signaux et systèmes, 91190, Gif-sur-Yvette, France.}
\email[Rovelli]{giuditta.rovelli@centralesupelec.fr}

\thanks{This work was supported by the European Research Council (ERC) under the European Union’s Horizon 2020 research and innovation programme (grant agreement GEOSUB, No. 945655), and by the IRP project GEOSUBMAN by INSMI-CNRS. The authors also acknowledge the INdAM support.}

\begin{document}

\begin{abstract}
We study the topology and regularity of small sub-Riemannian balls under an index condition on length-minimizing curves in the metric tangent space. We prove that sufficiently small spheres are Lipschitz hypersurfaces homeomorphic to Euclidean spheres, while closed balls are homeomorphic to Euclidean closed balls, and open balls are diffeomorphic to Euclidean open balls. These results improve the known ones and extend them beyond two-step distributions. The proof relies on uniform Lipschitz estimates for the distance, of independent interest.
\end{abstract}

\maketitle

\section{Introduction and statement}

In Riemannian geometry, the topology of metric balls is controlled by the exponential map: below the injectivity radius at any given point, the exponential map identifies metric balls with Euclidean ones.  In particular, sufficiently small spheres are diffeomorphic to \(S^{n-1}\), closed balls are diffeomorphic to the closed Euclidean ball, and open balls are diffeomorphic to \(\mathbb R^n\).

In the genuinely sub-Riemannian case, this argument breaks down, since the exponential map is singular at the zero covector. The sub-Riemannian distance induces the manifold topology, but this gives no direct information on the topology of metric spheres and balls. In particular, it is not clear in general whether sufficiently small balls are homeomorphic to Euclidean balls. This specific question was singled out as an open problem by Montgomery in \cite[Section~10.3]{Montgomery}. 

A general result in this direction is due to Baryshnikov
\cite{Baryshnikov}, who proved that sufficiently small sub-Riemannian
spheres and balls for two-step distributions are homeomorphic to their
Euclidean counterparts.  His argument replaces the radial contraction of
the Riemannian picture by a suitable contracting flow and studies the
distance along its orbits.  
The details of the proof are only sketched, and we were not able to reproduce parts of the strategy there. 

Our main result improves \cite{Baryshnikov} proving additional regularity results and, more importantly, replacing the two-step assumption by a second-order index condition on the length-minimizing trajectories of the nilpotent approximation (i.e.\ the metric tangent space). Our assumption covers a substantially larger class of structures than the two-step case. 

We state the main result; for preliminaries see Section~\ref{sec:preliminaries}. Let \(M\) be a connected smooth manifold of dimension \(n\geq1\), let
\(q_0\in M\), and let \(X_1,\ldots,X_m\) be smooth vector fields on \(M\)
satisfying the H\"ormander condition at every point.  Let \(d\) be the
corresponding Carnot--Carath\'eodory distance, and denote by
\(\widehat X_1,\ldots,\widehat X_m\) the nilpotent approximations at \(q_0\)
in privileged coordinates. 
For $\varepsilon>0$, denote the sub-Riemannian spheres, closed balls and open balls by
\begin{equation}
 \begin{aligned}
 S_{q_0}(\varepsilon)
   &:=\{q\in M:d(q_0,q)=\varepsilon\},\\
 \overline{B}_{q_0}(\varepsilon)
   &:=\{q\in M:d(q_0,q)\leq\varepsilon\},\\
 B_{q_0}(\varepsilon)
   &:=\{q\in M:d(q_0,q)<\varepsilon\}.
 \end{aligned}
\end{equation}
We also write $S^{n-1}:=\{z\in \R^n: |z|=1\}$, $B^n:=\{z\in\R^n:|z|<1\}$ and
$\overline B^n:=\{z\in\R^n:|z|\leq1\}$.

We use the notions of Morse index and corank of an abnormal trajectory
in the sense of Agrachev--Sarychev \cite{ASmorse}; precise definitions
are recalled in Section~\ref{sec:endpoint-preliminaries}.

\begin{theorem}\label{thm:main}
Assume that every nonconstant abnormal length-minimizing trajectory of the
nilpotent approximation $(\widehat X_1,\ldots,\widehat X_m)$ starting at $0$
has Morse index strictly larger than its corank.
Then there exists $\varepsilon_*>0$ such that, for every
$0<\varepsilon\leq\varepsilon_*$,
there is a homeomorphism of pairs
\begin{equation}
 \bigl(\overline B^n,S^{n-1}\bigr)
 \simeq
 \bigl(\overline{B}_{q_0}(\varepsilon),S_{q_0}(\varepsilon)\bigr).
\end{equation}
Moreover, the open metric ball $B_{q_0}(\varepsilon)$ is diffeomorphic to $B^n$.
\end{theorem}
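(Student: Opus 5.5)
We work in privileged coordinates centered at $q_0$, with weights $w_1\le\dots\le w_n$, and use the dilations $\delta_\lambda(z)=(\lambda^{w_1}z_1,\dots,\lambda^{w_n}z_n)$ and the generator of the dilation flow $E=\sum_i w_i z_i\partial_{z_i}$. The plan is to show that the distance function $f(z):=d(0,z)$ is strictly increasing along the rays $t\mapsto\delta_{e^t}(z)$, uniformly and in a Lipschitz-quantitative sense, on a small punctured neighborhood of $0$.

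Concretely, we aim for two constants $c>0$ and $\varepsilon_0>0$ such that for every $z$ with $0<f(z)<\varepsilon_0$
\begin{equation}
 \liminf_{t\to0^+}\frac{f(\delta_{e^{t}}z)-f(z)}{t}\ \ge\ c\, f(z).
\end{equation}
Once this holds, each orbit $\{\delta_{e^t}z\}$ meets $S_{q_0}(\varepsilon)$ exactly once for $\varepsilon\le\varepsilon_*<\varepsilon_0$. The hitting time $\tau_\varepsilon(z)$ is then continuous by the implicit-function theorem for Lipschitz monotone functions. The map
\begin{equation}
 S_{q_0}(\varepsilon)\times(0,1]\ni(z,s)\mapsto \delta_s z
\end{equation}
extends by $0$ at $s=0$ to a homeomorphism of the cone onto $\overline B_{q_0}(\varepsilon)$. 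Since the orbit space of $\R^n\setminus\{0\}$ under $\delta$ is $S^{n-1}$ (a quasi-homogeneous sphere meets every orbit once transversally), $S_{q_0}(\varepsilon)$ is homeomorphic to $S^{n-1}$. This gives the homeomorphism of pairs. For the Lipschitz hypersurface statement, $S_{q_0}(\varepsilon)$ is a graph over the quasi-sphere in the orbit-time coordinate, with Lipschitz height by the uniform estimate.

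For the diffeomorphism $B_{q_0}(\varepsilon)\cong B^n$, the open ball is an open set that is star-shaped with respect to the complete contracting flow of $-E$: every orbit enters and stays in, with a continuous exit time. We would smooth the exit-time function (a standard approximation of a continuous, orbitwise transverse function) and rescale along orbits, as in the proof that open star-shaped domains are diffeomorphic to $\R^n$. Alternatively, $B_{q_0}(\varepsilon)=\bigcup_k$ of smoothly bounded sublevel sets of a smoothing of $f$ that are increasing balls, and one applies the Brown/Stallings-type criterion for smooth increasing unions.

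The key estimate is proved by blow-up and contradiction. Set $r=f(z)$ and rescale, $\hat z=\delta_{1/r}z$, so that the rescaled distances $f_r:=r^{-1}f\circ\delta_r$ converge uniformly to the nilpotent distance $\hat f$. If the estimate failed, we would obtain minimizers $\gamma_k$ for the rescaled structures with vanishing first variation in the direction $E$. These converge, up to subsequences, to a minimizer $\hat\gamma$ of the nilpotent approximation. The variation of the endpoint along $E$ corresponds to the variation $u\mapsto$ reparametrized control, which does not decrease length to first order only if $E(\hat\gamma(1))$ lies in the image of the differential of the endpoint map restricted to length-nonincreasing directions. For a normal or strictly normal limit, homogeneity gives $d\hat f(E)=\hat f>0$ exactly (Euler), and openness of the endpoint map yields uniformity. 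The abnormal case is the main obstacle, since there the endpoint map is not a submersion. Here we invoke the Agrachev--Sarychev second-order openness theorem: Morse index greater than corank implies the endpoint map restricted to a length sublevel is locally open at $\hat u$. This openness is moreover stable under $C^1$-small perturbations of the system, which covers the rescaled structures for small $r$. Openness lets us reach $\delta_{e^t}\hat\gamma(1)$ with length at most $(1+c't)\hat f$, uniformly in $k$. The delicate point is controlling the radii of openness uniformly over the compact set of minimizers and the perturbations; we would handle it by a compactness argument on the unit nilpotent sphere.
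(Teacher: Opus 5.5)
\textbf{There is a genuine gap: the argument for your key estimate is inverted and relies on an openness property that fails at minimizers.}

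Your reduction is sound. Suppose $d(q_0,\cdot)$ increases at a definite rate along dilation orbits near $q_0$. Then your hitting-time, cone and star-shapedness arguments give the theorem essentially as in the paper. After rescaling, that rate is equivalent to a uniform bound on how $d_s(0,\theta)$ depends on $s$, via $d_\varepsilon(0,\delta_r\theta)=r\,d_{\varepsilon r}(0,\theta)$.

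The gap is in your proof of this estimate, starting with the direction of the inequality. Failure of your Dini estimate means $f(\delta_{e^{t_j}}z)<(1+ct_j)f(z)$ along some $t_j\downarrow0$. That is an \emph{upper} bound at the expanded point. Reaching $\delta_{e^t}\hat\gamma(1)$ with length at most $(1+c't)\hat f$ is another upper bound, so it contradicts nothing. You need the lower bound $f(\delta_{e^t}z)\ge(1+ct)f(z)$, i.e.\ a way to reach the \emph{contracted} point $\delta_{e^{-t}}y$ with length at most $(1-ct)f(y)$. The natural route is to scale a minimizing control by $e^{-t}$. Since $\cE_s(\lambda u)=\delta_\lambda\cE_{s\lambda}(u)$, this lands within $O(st)$ of the target. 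You must then correct the endpoint at a cost \emph{linear} in the error, with a constant independent of $s$. That is exactly a uniform Lipschitz estimate for $d_s(0,\cdot)$; Step~5 of Proposition~\ref{prop:uniform-stability} is this competitor argument. Euler's identity for $\hat f$ does not pass to $f_r$ under uniform convergence, so everything hinges on that estimate.

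The tool you invoke does not supply it, for three reasons.
\begin{enumerate}
\item The endpoint map restricted to a length sublevel (or level set) is never open at a minimizer. For $t>0$ the point $\delta_{e^t}\hat\gamma(1)$ has nilpotent distance $e^t\hat f>\hat f$, so it is not reached.
\item Second-order openness of the unrestricted map only gives controls with $\|v-\hat u\|\lesssim|y-\cE_0(\hat u)|^{1/2}$. By itself this yields a cost of order $\sqrt t$, not $t$. A linear rate also needs control of the normal multipliers.
\item Uniformity over the perturbed structures, which you leave to compactness, is the real difficulty. The rank of $\dd_u\cE_s$ can jump at $s=0$, so kernels and intrinsic Hessians need not vary continuously, and openness radii need not be uniform.
\end{enumerate}

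The paper handles all three points at once by bounding the proximal subdifferentials of $f_s=\frac12 d_s(0,\cdot)^2$ uniformly for small $s$. If $|p_j|\to\infty$, then $-p_j/|p_j|$ converges to an abnormal multiplier $\lambda$ of a nilpotent minimizer. The explicit correction $a_j(w)\in(\ker\dd_u\cE_0)^\perp$ plus a dimension count transfers the second-order condition from $\ker\dd_{u_j}\cE_{s_j}$ to $\ker\dd_u\cE_0$. This gives $\operatorname{ind}_-(\lambda\circ\operatorname{Hess}_u\cE_0)\le k$, contradicting the hypothesis; the dimension count is why the index must strictly exceed the corank. Uniform spatial Lipschitz bounds follow, then Lipschitz dependence on $s$, then monotonicity along rays.
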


As a consequence of the proof of Theorem \ref{thm:main} we also have the following regularity result. Here Lipschitz regularity is understood with respect to smooth local
coordinates.

\begin{theorem}
\label{thm:lipschitz-spheres}
In the setting of Theorem~\ref{thm:main}, for all $0<\varepsilon\leq \varepsilon_*$, the metric sphere $S_{q_0}(\varepsilon)$ is an embedded Lipschitz hypersurface.
Moreover, in rescaled privileged coordinates, these spheres admit
uniformly bi-Lipschitz parametrizations by $S^{n-1}$.
\end{theorem}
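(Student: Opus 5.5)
The plan is to derive both statements from a uniform transversality property of the distance along a contracting flow, in rescaled privileged coordinates. Let \(\delta_\lambda(x)=(\lambda^{w_1}x_1,\ldots,\lambda^{w_n}x_n)\) be the anisotropic dilations and \(E=\sum_i w_i x_i\partial_{x_i}\) the Euler field generating them. For \(\varepsilon>0\) set \(d_\varepsilon(x,y):=\varepsilon^{-1}d(\delta_\varepsilon x,\delta_\varepsilon y)\). These rescaled distances converge uniformly on compact sets to the nilpotent distance \(\widehat d\) as \(\varepsilon\to0\). The sphere \(S_{q_0}(\varepsilon)\) is then the image under \(\delta_\varepsilon\) of the unit sphere \(\Sigma_\varepsilon:=\{x: d_\varepsilon(0,x)=1\}\).

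The first step is the key estimate, which I take to be the uniform Lipschitz estimate announced in the abstract. It asserts that there are \(c>0\), a compact annulus \(K\) around \(\widehat S(1)\), and \(\varepsilon_*>0\) such that for all \(\varepsilon\le\varepsilon_*\) the function \(f_\varepsilon(x):=d_\varepsilon(0,x)\) is locally Lipschitz on \(K\), with constants uniform in \(\varepsilon\). It also asserts that \(f_\varepsilon\) is strictly increasing along the flow lines of \(E\) in a quantitative way:
\[
f_\varepsilon(e^{tE}x)-f_\varepsilon(x)\ge c\,t \quad\text{for } x,\ e^{tE}x\in K,\ t\ge0 .
\]
This is exactly where the index condition enters. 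For normal geodesics the exponential map is a local diffeomorphism away from conjugate points, which gives the needed transversality of \(E\) to the level sets. For abnormal minimizers, Morse index greater than corank is the Agrachev--Sarychev condition ensuring that the endpoint map is open at second order with a Hölder/Lipschitz-type estimate. That estimate lets one produce admissible curves reaching \(e^{tE}x\) with length controlled linearly in \(t\). I expect this step to be the main obstacle, and I will assume it as established earlier in the paper.

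Given this estimate, the parametrization follows. Each ray \(t\mapsto e^{tE}z\) with \(z\in S^{n-1}\) (Euclidean, or a fixed homogeneous-norm sphere) crosses \(K\) and meets \(\Sigma_\varepsilon\) exactly once, at a time \(t_\varepsilon(z)\), by strict monotonicity. So \(\Phi_\varepsilon(z):=e^{t_\varepsilon(z)E}z\) is a bijection \(S^{n-1}\to\Sigma_\varepsilon\). Lipschitz continuity of \(t_\varepsilon\) comes from an implicit-function argument in the Lipschitz category. For nearby \(z,z'\), the Lipschitz bound on \(f_\varepsilon\) gives
\[
|f_\varepsilon(e^{tE}z)-f_\varepsilon(e^{tE}z')|\le L|z-z'| .
\]
Combined with the lower bound \(c\) on the growth along \(E\), this yields \(|t_\varepsilon(z)-t_\varepsilon(z')|\le (L/c)|z-z'|\).

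The inverse map \(x\mapsto x/\text{(norm)}\), i.e. the projection along \(E\)-orbits, is smooth on \(K\). Hence \(\Phi_\varepsilon\) is bi-Lipschitz with constants independent of \(\varepsilon\). Moreover, \((t,z)\mapsto e^{(t+t_\varepsilon(z))E}z\) is a bi-Lipschitz chart near \(\Sigma_\varepsilon\) straightening it to \(\{t=0\}\), so \(\Sigma_\varepsilon\) is an embedded Lipschitz hypersurface. Applying the smooth map \(\delta_\varepsilon\), which is a diffeomorphism in the original coordinates, transfers the embedded Lipschitz hypersurface property to \(S_{q_0}(\varepsilon)\) for each fixed \(\varepsilon\). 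The uniformity statement is the one in rescaled coordinates just proved.
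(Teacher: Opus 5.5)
Your proposal follows the paper's proof. The sphere is written as the radial graph $r=r_\varepsilon(\theta)$ over $S^{n-1}$ along dilation orbits, and uniform Lipschitz continuity of $r_\varepsilon$ comes from the spatial bound \eqref{eq:lip1} plus the quantitative radial monotonicity \eqref{eq:monotonicity}. The inverse is the smooth projection along orbits, and $\Phi_\varepsilon^{-1}$ transfers everything to $S_{q_0}(\varepsilon)$.

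The one correction concerns your heuristic for the monotonicity you assume. It does not come from openness of the endpoint map; your sketch would bound the growth along $E$ from above, not below. In the paper it comes from the exact homogeneity of $d_0$ through $d_\varepsilon(0,\delta_r\theta)=r\,d_{\varepsilon r}(0,\theta)$, combined with the Lipschitz dependence on the parameter \eqref{eq:lip2}. This gives monotonicity on the whole segment $0<r\le R$, and hence a unique crossing on each full ray rather than just inside an annulus. The index condition enters only through Proposition~\ref{prop:uniform-stability}.
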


\begin{remark}
    In the three-dimensional Heisenberg group, metric spheres have conical
singularities at the two points on the vertical axis; see \cite{ABB}. Thus the conclusion of
Theorem~\ref{thm:main} for spheres cannot be strengthened to a
diffeomorphism of embedded hypersurfaces.
\end{remark}
\begin{remark}[A sufficient condition]\label{rmk:sufficient}
A sufficient condition for the hypothesis of Theorem~\ref{thm:main} is that
no nonconstant minimizing trajectory of the nilpotent approximation admit
a Goh--Legendre abnormal lift (Definition~\ref{def:goh-legendre}).
Indeed, finite negative Morse index for an abnormal multiplier along a
minimizer forces the lift of that same multiplier to satisfy the Goh
and generalized Legendre conditions; see \cite[Theorem~12.12]{ABB}.
In their absence, the Morse index of any abnormal length-minimizer is infinite, and our assumption is clearly verified.
\end{remark}
\begin{remark}[On the index condition]
    The index condition in Theorem~\ref{thm:main} is closely related to the second-order criteria developed by Agrachev and Sarychev \cite{ASmorse,ASsubanalytic} used to establish subanalyticity of sub-Riemannian distances \cite{AgrachevCompactness,AgrachevGauthier}. In fact, along length-minimizers, our condition is equivalent to the nonnegativity of the index used in those works. A discrepancy arises from their use of the endpoint map restricted to the sphere of controls, rather than the unrestricted endpoint map as in our formalism. Thus, in the real-analytic setting, our hypothesis also implies subanalyticity of the germ of the sub-Riemannian distance at the base point; see \cite{AgrachevGauthier}.

\end{remark}
The key technical result in the proof of Theorem~\ref{thm:main} is an Euclidean Lipschitz estimate for the family of approximating distances $d_s$, defined  via blowup in Section \ref{sec:blowup}. More precisely, Proposition~\ref{prop:uniform-stability}, gives uniform
Euclidean Lipschitz estimates for the family $d_s(0,\cdot)$ away from the origin, together with Lipschitz dependence on $s$. Proposition~\ref{prop:two-point-lipschitz}
extends these estimates to compact sets of pairs of points away from
the diagonal. The proof of these facts, that have independent interest, uses nonsmooth analysis, a viewpoint inspired by \cite{RiffordSard}.

For a fixed sub-Riemannian structure, Lipschitz regularity of the distance was established in
\cite[Theorem~5.7]{AgrachevLee} and \cite[Theorem~12.11]{ABB}. Compare also with \cite[Theorem~3.15]{Rifford}.


As a further consequence of the proof of Theorem \ref{thm:main}, all sufficiently small metric
spheres are embedded Lipschitz hypersurfaces
(Theorem~\ref{thm:lipschitz-spheres}). For homogeneous distances, the implication from local Lipschitz regularity
of the distance to that of its spheres was established in
\cite[Lemma~4.3]{LeDonneNicolussiGolo}. In the general setting,
Rifford \cite[Theorem~3.1]{RiffordSpheres} proved Lipschitz regularity of
spheres for almost every radius on complete sub-Riemannian manifolds,
assuming that no nonconstant minimizing curve from the center is singular. See also \cite{LeDonneNalon} for Euclidean rectifiability of sub-Finsler
spheres in free step-two Carnot groups. Finally, the existence of tangent hyperplanes to spheres is discussed in \cite{Agrachev-hyperplanes,Trelat-hyperplanes}. 

\subsection{Legendre-indefinite families}
\label{sec:legendre-indefinite}

The sufficient condition of Remark \ref{rmk:sufficient} holds for a step-two structure, since it is well-known that there cannot be nonconstant Goh abnormal minimizers.  It also holds, trivially, when the nilpotent approximation has no
nonconstant abnormal minimizers. We stress that the hypothesis of Theorem~\ref{thm:main} is strictly weaker than the absence of Goh
minimizers: the rank-three, step-three Carnot group
$N_{6,3,1}$ (in the notation of \cite{cornucopia}) has nonconstant Goh
minimizing curves but no nonconstant trajectories admitting a
Goh--Legendre abnormal lift; see \cite[proof of Theorem~7.6 and Remark~7.7]{BorzaRizzi}.

A sufficient condition can be formulated in terms of a generating family
$X_1,\ldots,X_m$ (see Section~\ref{sec:preliminaries}).  For
$u\in\R^m$, write $X_u:=\sum_i u_iX_i$.
For $x\in M$ and $u\in\R^m$ such that $X_u(x)\neq0$, set
\begin{equation}
 E_x(u):=\operatorname{span}\bigl\{
 X_v(x),\ [X_v,X_w](x),\ [X_u,[X_v,X_w]](x):
 v,w\in\R^m\bigr\}.
\end{equation}
For $0\neq\lambda\in E_x(u)^\perp$, define the quadratic form
$Q_{x,u,\lambda}:\R^m\to\R$ by
\begin{equation*}
 Q_{x,u,\lambda}(v)
 :=\bigl\langle\lambda,
       [[X_u,X_v],X_v](x)\bigr\rangle.
\end{equation*}

\begin{definition}[Legendre-indefinite families]
\label{def:legendre-indefinite}
A family of vector fields $X_1,\ldots,X_m$ is \emph{Legendre-indefinite} if, for every
$x\in M$, every $u\in\R^m$ with $X_u(x)\neq0$, and every
$0\neq\lambda\in E_x(u)^\perp$, the quadratic form
$Q_{x,u,\lambda}$ is indefinite.  Equivalently, there are
$v_+,v_-\in\R^m$ such that
\begin{equation}
 Q_{x,u,\lambda}(v_+)>0,
 \qquad
 Q_{x,u,\lambda}(v_-)<0.
\end{equation}
\end{definition}
\begin{remark}\label{rmk:invarianceassertion}
Definition \ref{def:legendre-indefinite} is invariant under smooth invertible changes of generators,
with the corresponding change of control. 
These facts are proved in Lemma~\ref{lem:legendre-indefinite} below.
\end{remark}
\begin{remark}
    If $\Delta\subset TM$ is a distribution, the above objects can be defined in a similar way in terms of a local frame for $\Delta$. In this case, $E_x(u)$ depends only on $X=X_u(x)$ and it holds
\begin{equation}
E_x(X)= \Delta(x)+[\Delta,\Delta](x)+[X,[\Delta,\Delta]](x),
\end{equation}
where the brackets are evaluated using local horizontal extensions. Furthermore, $Q_{x,u,\lambda}(v)$ depends only on $X$ and $V=X_v(x)$. So, both objects do not depend on the choice of local frames.
We define Legendre-indefinite distributions as those for which local frames are Legendre-indefinite. 
\end{remark}

By
Lemma~\ref{lem:legendre-indefinite}, if the family
$\widehat X_1,\ldots,\widehat X_m$ of the nilpotent approximation is
Legendre-indefinite, no nonconstant trajectory admits a Goh--Legendre
 lift. Thus every nonconstant abnormal length-minimizing trajectory
has infinite Morse index and satisfies the hypothesis of
Theorem~\ref{thm:main}.

Legendre-indefiniteness extends the medium-fat condition introduced by
Agrachev and Sarychev
\cite{ASsubanalytic}; see also
\cite[Example~2.1]{Rifford}.  Indeed, for medium-fat distributions
$E_x(X)=T_xM$ whenever $X(x)\neq0$, so the condition in
Definition~\ref{def:legendre-indefinite} is vacuous.  Every medium-fat
distribution is therefore Legendre-indefinite. The converse does not hold, in general: the Carnot group $N_{6,3,1}$ mentioned above has a Legendre-indefinite distribution, that is not medium-fat.

\subsection{Closing thoughts}
We do not know whether the conclusions of Theorem~\ref{thm:main} hold without assumptions on the nilpotent approximation. At a more elementary level, we do not even know whether small sub-Riemannian balls are contractible, or whether
\begin{equation}
 B_{q_0}(r) = \operatorname{int}\overline B_{q_0}(r).
\end{equation}
In particular, it is open whether strict inclusion can occur for arbitrarily small radii.

\subsection{Acknowledgements} We thank A. Agrachev for suggesting the possibility of extending the result of \cite{Baryshnikov} to structures with no nonconstant Goh-abnormal trajectories.

\subsection{AI disclosure} Most of the mathematical content of this paper was developed in the context of Giuditta Rovelli's Master's thesis at the University of Trieste \cite{RovelliMasterThesis}. AI tools were used only for manuscript preparation and human-guided improvements to exposition. All mathematical results and proofs were developed and verified by humans.

\section{Preliminaries}\label{sec:preliminaries}

We collect the notation and standard facts used in the proof.  Further
background can be found in \cite{ABB,Bellaiche,ASbook}.

\subsection{Control system and distance}

Let $X_1,\ldots,X_m$ be smooth vector fields on a connected manifold $M$. Their rank may be nonconstant on $M$.
For each integer $r\geq1$, define
\begin{equation}
 \mathcal L^r(q):=\operatorname{span}\bigl\{
 [X_{i_1},[\cdots,[X_{i_{k-1}},X_{i_k}]\cdots]](q):
 1\leq k\leq r,\ 1\leq i_1,\ldots,i_k\leq m\bigr\},
\end{equation}
where the expression for $k=1$ means $X_{i_1}(q)$.
We assume the \emph{bracket-generating}, or \emph{H\"ormander}, condition to hold. Namely
for every $q\in M$ there is an integer $r(q)\geq1$ such that
\begin{equation}
 \mathcal L^{r(q)}(q)=T_qM.
\end{equation}
An absolutely continuous curve $\gamma:[0,1]\to M$ is
\emph{horizontal}, or \emph{admissible}, if there exists
$u\in L^1([0,1];\R^m)$ such that
\begin{equation}\label{eq:control-system}
 \dot\gamma(t)=\sum_{i=1}^m u_i(t) X_i(\gamma(t))
 \quad\text{for almost every }t\in[0,1].
\end{equation}
 For a given $u\in L^1([0,1];\R^m)$, we denote by $\gamma_u$ the
solution of \eqref{eq:control-system} with a fixed initial point,
provided it exists on $[0,1]$. For $q\in M$ and $\xi\in\operatorname{span}\{X_1(q),\ldots,X_m(q)\}$,
set
\begin{equation}
 |\xi|_q:=\min\{|v|:X_v(q)=\xi\}.
\end{equation}
Therefore, the length of a horizontal curve is
\begin{equation*}
 \ell(\gamma):=\int_0^1|\dot\gamma(t)|_{\gamma(t)}\,\dd t.
\end{equation*}
Every horizontal curve has a unique associated \emph{least-norm control}
$u_\gamma\in L^1([0,1];\R^m)$. The latter is characterized by the following equality for almost every $t$:
\begin{equation}
 X_{u_\gamma(t)}(\gamma(t))=\dot\gamma(t),\qquad
 |u_\gamma(t)|=|\dot\gamma(t)|_{\gamma(t)}.
\end{equation}
In particular, $\ell(\gamma)=\|u_\gamma\|_{L^1}$.
A horizontal curve has \emph{constant speed} if
$|\dot\gamma(t)|_{\gamma(t)}$, equivalently $|u_\gamma(t)|$, is constant
almost everywhere.

The associated sub-Riemannian, or Carnot--Carath\'eodory, distance is defined as
\begin{equation*}
 d(p,q):=\inf\{\ell(\gamma):\gamma(0)=p,\ \gamma(1)=q,\
                              \gamma\text{ horizontal}\}.
\end{equation*}
The bracket-generating hypothesis and the Chow--Rashevskii theorem imply
that $d$ is finite and induces the manifold topology; see, for example,
\cite[Chapter~3]{ABB}. Curves realizing this infimum are called
\emph{length-minimizing}.

\subsection{Energy formulation and endpoint maps} 
It is convenient to introduce the following energy minimization problem. Set $H:=L^2([0,1];\R^m)$ and
$\cJ(u):=\tfrac12\|u\|_H^2$. For $u\in H$ whose horizontal trajectory is defined, it holds
\begin{equation}\label{eq:CS}
     \ell(\gamma_u)^2=\|u_{\gamma_u}\|_{L^1}^2
 \leq\|u_{\gamma_u}\|_{L^2}^2
 \leq\|u\|_{L^2}^2=2\cJ(u).
\end{equation}
The first inequality is Cauchy--Schwarz on $[0,1]$; the second follows
from minimality. Equality throughout holds if and only if $\gamma_u$ has constant speed and $u$ is its least-norm control.

For fixed initial point $p\in M$, we define  the \emph{endpoint
map} $\cE : \mathcal U \to M$ by $\cE(u):=\gamma_u(1)$, where $\mathcal{U}\subset H$ is the open set of controls where the solution of \eqref{eq:control-system} with $\gamma_u(0)=p$ exists on $[0,1]$. Then
\begin{equation}\label{eq:energy-distance-general}
 \tfrac12d(p,q)^2=\inf\{\cJ(u):u\in\mathcal U,\ \cE(u)=q\}.
\end{equation}
One direction follows from \eqref{eq:CS}. The reverse direction follows
by parametrizing a sequence of curves with lengths tending to $d(p,q)$
at constant speed and taking their least-norm controls.
The controls attaining this infimum, called \emph{energy-minimizing},
are precisely the least-norm controls of constant-speed length-minimizing
curves. In particular, they belong to $L^\infty([0,1];\R^m)$.

\subsection{Subdifferentials and Lipschitz functions}
\label{sec:subdifferentials}

Let $O\subset\R^n$ be open and $f:O\to\R$ continuous. A
\emph{support function from below} for $f$ at $x\in O$ is a function
$\varphi$ defined on a neighborhood of $x$ such that $\varphi\leq f$ and $\varphi(x)=f(x)$.
The \emph{Fr\'echet subdifferential} is
\begin{equation}
 \partial^-f(x):=\{\dd_x\varphi:\varphi\text{ is a }C^1
             \text{ support function from below at }x\}.
\end{equation}
The \emph{proximal subdifferential} $\partial^-_P f(x)$ is defined in
the same way using $C^2$ support functions. In particular,
$\partial^-_P f(x)\subseteq\partial^-f(x)$.
We use the Euclidean dual norm for covectors.

The following are standard facts of nonsmooth analysis; see, e.g.,
\cite{CLSW,RockafellarWets}. We include short proofs in
Appendix~\ref{app:nonsmooth}; compare
also \cite[Propositions~2.2 and~2.4]{RiffordSard}, respectively.

\begin{lemma}[Approximation by proximal subdifferentials]
\label{lem:proximal-approximation}
For every $x\in O$, $p\in\partial^-f(x)$, and neighborhood $W$ of
$(x,p)$ in $O\times(\R^n)^*$, there are $y\in O$ and
$q\in\partial^-_P f(y)$ such that $(y,q)\in W$.
\end{lemma}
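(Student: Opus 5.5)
The plan is the standard penalization argument. Fix $x\in O$, $p\in\partial^-f(x)$, and a $C^1$ support function from below $\varphi$ for $f$ at $x$ with $\dd_x\varphi=p$. Shrinking $W$, we may assume $W\supseteq B_\delta(x)\times B_\delta(p)$ with $\delta>0$, and that $\overline{B}_\delta(x)\subset O$ lies inside the domain of $\varphi$. The idea is to make $x$ a strict minimizer of $f-\varphi$ by adding a small quadratic, perturb the minimization problem with a smooth quadratic penalty centered at nearby points, and read off a $C^2$ support function at the perturbed minimizer.

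\textbf{Step 1 (strict minimizer).} Set $g(y):=f(y)-\varphi(y)+|y-x|^2$ on $K:=\overline{B}_r(x)$, where $r\leq\delta$ will be chosen later. Then $g\geq |y-x|^2$ and $g(x)=0$, so $x$ is a strict minimizer of $g$ on $K$.

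\textbf{Step 2 (replace $\varphi$ by a $C^2$ function).} Since $\varphi$ is only $C^1$, I would not use $\varphi$ itself in the support function. Instead I would use its first-order Taylor data. Let $\omega$ be a modulus of continuity of $\dd\varphi$ on $K$. Then
\begin{equation}
\varphi(y)\geq \varphi(x)+p(y-x)-\omega(|y-x|)\,|y-x|,
\end{equation}
and hence
\begin{equation}
f(y)\geq f(x)+p(y-x)-\omega(|y-x|)|y-x|.
\end{equation}
Now define
\begin{equation}
h(y):=f(y)-f(x)-p(y-x)+k|y-x|^2 .
\end{equation}
Then $h(x)=0$, and $h(y)\geq k|y-x|^2-\omega(|y-x|)|y-x|$. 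The last quantity is not yet positive near $x$ for fixed $k$, so the construction must be adjusted.

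\textbf{Step 2, corrected.} Because $\omega$ is merely a modulus, a single quadratic cannot beat it; this is exactly where compactness and perturbation enter. Let $y_k$ be a minimizer of the continuous function
\begin{equation}
h_k(y):=f(y)-p(y-x)+k|y-x|^2
\end{equation}
on $K$. Comparing $h_k(y_k)\leq h_k(x)$ with the lower bound on $f$ gives
\begin{equation}
k|y_k-x|^2\leq \omega(|y_k-x|)\,|y_k-x|,
\end{equation}
so $|y_k-x|\leq \omega(r)/k\to0$. For $k$ large, $y_k$ is therefore an interior point of $K$. Since $y_k$ is an interior minimizer of $h_k$, the function
\begin{equation}
\psi(y):=f(y_k)+p(y-y_k)-k|y-x|^2+k|y_k-x|^2
\end{equation}
is $C^2$ and is a support function from below for $f$ at $y_k$. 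Hence
\begin{equation}
q_k:=p-2k(y_k-x)\in\partial^-_Pf(y_k).
\end{equation}

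\textbf{Step 3 (main obstacle: $q_k\to p$).} This requires $k|y_k-x|\to0$, and the crude bound only gives $k|y_k-x|\leq\omega(|y_k-x|)\leq\omega(\omega(r)/k)$, which tends to $0$ as $k\to\infty$ because $\omega(s)\to0$ as $s\to0$. So $y_k\to x$ and $q_k\to p$, and for $k$ large we get $(y_k,q_k)\in W$.

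The one delicate point is the use of the Taylor lower bound with a modulus $\omega$ that is uniform on $K$; this uses only that $\varphi$ is $C^1$ near $x$, and continuity of $f$ guarantees that the minimizer $y_k$ exists.
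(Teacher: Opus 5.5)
Your proof is correct, but it takes a different route from the paper's. The paper keeps the support function $\varphi$ itself. It subtracts $|y-x|^2$ so that $x$ is the unique contact point on a ball $D$, then mollifies $\varphi$ into smooth $\varphi_j\to\varphi$ in $C^1(D)$ and takes minimizers $y_j$ of $f-\varphi_j$ on $D$. It gets $y_j\to x$ from uniqueness of the minimizer of $f-\varphi$ together with uniform convergence, and reads off $q_j=\dd_{y_j}\varphi_j\to p$ from the $C^1$ convergence.

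You instead discard $\varphi$ once you have the first-order inequality $f(y)\geq f(x)+p(y-x)-\omega(|y-x|)\,|y-x|$. You then minimize the fixed linear correction plus a stiffening penalty $k|y-x|^2$. The single estimate $k|y_k-x|\leq\omega(|y_k-x|)$ then gives both $y_k\to x$ and $|q_k-p|=2k|y_k-x|\leq 2\,\omega(\omega(r)/k)\to0$.

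What each approach buys:
\begin{itemize}
\item Your version needs no mollification and no strict-contact modification. It gives explicit rates, and it uses only the inequality $f(y)\geq f(x)+p(y-x)-o(|y-x|)$. So it applies verbatim to the ``little-$o$'' characterization of Fr\'echet subgradients.
\item The paper's version avoids modulus bookkeeping and is a purely qualitative compactness argument.
\end{itemize}

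Two points to tidy up:
\begin{itemize}
\item Your Step 1 and the first attempt in Step 2 are never used and can be deleted.
\item Make explicit that $\omega(s):=\sup_{|z-x|\leq s}|\dd_z\varphi-p|$, so that $\omega$ is nondecreasing. This monotonicity is exactly what justifies both $\omega(|y_k-x|)\leq\omega(r)$ and $\omega(|y_k-x|)\leq\omega(\omega(r)/k)$. Also choose $r$ so that $\overline B_r(x)$ lies in the neighborhood where $\varphi\leq f$, not merely in the domain of $\varphi$.
\end{itemize}
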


\begin{proposition}[Bounded subdifferentials and Lipschitz continuity]
\label{prop:subdifferential-lipschitz}
Let $C\subset O$ be open and convex, and let $L\geq0$. Then $f$ is
$L$-Lipschitz on $C$ if and only if
\begin{equation*}
 |p|\leq L\qquad\text{for every }x\in C
                     \text{ and every }p\in\partial^-f(x).
\end{equation*}
\end{proposition}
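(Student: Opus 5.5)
The forward implication is the easy one. Suppose $f$ is $L$-Lipschitz on $C$, and take $x\in C$ and $p=\dd_x\varphi\in\partial^-f(x)$, where $\varphi$ is a $C^1$ support function from below. For any unit vector $v$ and small $t>0$ we have
\begin{equation}
 \varphi(x+tv)-\varphi(x)\leq f(x+tv)-f(x)\leq Lt .
\end{equation}
Dividing by $t$ and letting $t\to0^+$ gives $\langle p,v\rangle\leq L$. Choosing $v$ as the direction realizing the Euclidean dual norm of $p$ yields $|p|\leq L$.

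For the converse I would use the mean-value strategy through proximal subgradients. I assume $|p|\leq L$ for all Fréchet subgradients on $C$. Fix $a,b\in C$; by symmetry it suffices to show $f(b)-f(a)\leq L|b-a|$. Arguing by contradiction, suppose $f(b)-f(a)>(L+2\eta)|b-a|$ for some $\eta>0$. Convexity of $C$ puts the segment $[a,b]$ in $C$, and I choose a compact tubular neighbourhood $K\subset C$ of it.

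The core step is to find a point of $C$ carrying a subgradient of norm larger than $L$. Set $e=(b-a)/|b-a|$ and consider the function $g(y)=f(y)-(L+\eta)\langle e,y\rangle$ restricted to the segment. Then $g(b)>g(a)$, so the maximum of $g$ on $[a,b]$ is not attained only at $a$. Take $t_0\in[0,1)$ to be the last time at which $g(a+t(b-a))\leq g(a)$. On $(t_0,1]$ the function is strictly larger than at $t_0$, so some point to the right of $t_0$ is a minimum of $g$ from that side. Working on the segment alone only controls directional information, though. The cleaner route is the proximal mean value inequality (Clarke--Ledyaev), proved by minimizing
\begin{equation}
 y\mapsto f(y)-(L+\eta)\langle e,y\rangle+\tfrac{1}{\delta}\operatorname{dist}(y,[a,b])^2+\text{(small linear correction in }t)
\end{equation}
over $K$. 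The squared distance to a segment is not $C^2$ everywhere, so I would replace it by a smooth penalty such as $|y-\pi(y)|^2$ after a coordinate rotation, or by $\tfrac1\delta|y'|^2$ with $y'$ the coordinates orthogonal to $e$. That penalty is smooth.

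To carry this out, I rotate coordinates so that $e$ is the first basis vector, writing $y=(s,y')$. Within the slab over the segment I minimize
\begin{equation}
 h(y)=f(y)-(L+\eta)s+\tfrac1\delta|y'|^2 .
\end{equation}
For $\delta$ small, continuity of $f$ on compacts forces the minimizers near $[a,b]$ to have $|y'|$ small. The real obstacle is keeping the minimizer off the two end faces $s=s_a$ and $s=s_b$. I would handle this by comparing with the endpoint values $h(a)$ and $h(b)$: the hypothesis $f(b)-f(a)>(L+2\eta)|b-a|$ gives $h(b)>h(a)+\eta|b-a|$, so the minimum is never near $b$. To prevent it from sitting at the face through $a$, I add a one-sided $C^2$ barrier in $s$ that blows up as $s\downarrow s_a$, for example $\delta'/(s-s_a)$, applied after moving $a$ slightly inward using continuity of $f$. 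At an interior minimizer $\bar y$, the function
\begin{equation}
 \varphi=h(\bar y)+(L+\eta)s-\tfrac1\delta|y'|^2-\text{barrier}
\end{equation}
is a $C^2$ support from below for $f$. Its differential has first component at least $L+\eta$, because the barrier derivative has the right sign. This gives a proximal subgradient $p\in\partial^-_Pf(\bar y)\subseteq\partial^-f(\bar y)$ with $|p|\geq L+\eta>L$, contradicting the hypothesis.

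The main obstacle is this boundary and penalty bookkeeping. Lemma~\ref{lem:proximal-approximation} is not needed in this direction, since proximal subgradients are already Fréchet subgradients.
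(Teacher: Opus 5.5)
Your forward implication is the same as the paper's. Your converse is correct but takes a different route.

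The paper argues directly. It regularizes $f$ by the inf-convolution $f_\varepsilon(x)=\min_{y\in D}\{f(y)+|x-y|^2/(2\varepsilon)\}$ over a tube $D\subset C$ around $[a,b]$. For small $\varepsilon$ all minimizers $y_x$ are interior, uniformly in $x$, so $(x-y_x)/\varepsilon\in\partial^-_P f(y_x)$ has norm at most $L$. This gives $f_\varepsilon(x')-f_\varepsilon(x)\le L|x'-x|+|x'-x|^2/(2\varepsilon)$, and a fine partition followed by $\varepsilon\downarrow0$ finishes the proof.

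You instead run, by contradiction, the Clarke--Ledyaev proximal mean value argument specialized to a segment. It produces a single point $\bar y$ near $[a,b]$ with a proximal subgradient $p$ satisfying $\langle p,e\rangle\ge L+\eta$. This is a slightly sharper, directional conclusion. The cost is boundary bookkeeping that the inf-convolution gets for free, and you should state the order in which the parameters are chosen:
\begin{itemize}
\item first $\tau$, with $a'=a+\tau e$ and $|f(a')-f(a)|<\eta|b-a|/3$;
\item then $\delta'$, with $\delta'/\tau<\eta|b-a|/3$;
\item only then $\delta$.
\end{itemize}
The barrier is nonnegative and $h(a')$ does not depend on $\delta$, so minimality gives $|\bar y'|^2\le C\delta$. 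This rules out the lateral boundary. Combined with uniform continuity of $f$ near $b$, it also rules out the whole face $s=s_b$, not just the point $b$. That is exactly what your sentence ``the minimum is never near $b$'' needs.

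The exploratory paragraph on $g$ along the segment can simply be dropped. Both arguments use only proximal subgradients, so either one proves the statement with $\partial^-_P f$ in place of $\partial^- f$.
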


\subsection{Morse index, corank, and Goh--Legendre lifts}
\label{sec:endpoint-preliminaries}

The endpoint map $\cE$ is smooth.
For $0\leq t\leq\tau\leq1$ and $u \in \mathcal U$, let $\Phi^u_{t,\tau}$ be the flow from time $t$
to time $\tau$ of $X_{u(\cdot)}:=\sum_i u_i(\cdot)X_i$, defined near $\gamma_u(t)$.
The differential of $\cE$ is given by the well-known formula
\begin{equation}\label{eq:endpoint-differential}
 \dd_u\cE(v)
 =\int_0^1 \dd_{\gamma_u(t)}\Phi^u_{t,1}
    \left(\sum_{i=1}^m v_i(t)X_i(\gamma_u(t))\right)\dd t.
\end{equation}

A control is \emph{singular}, or \emph{abnormal}, if it is a critical point
of $\cE$, that is, if $\dd_u\cE$ is not surjective. Any nonzero $\lambda_1 \in(\im\dd_u\cE)^\perp$ is called an \emph{abnormal (Lagrange) multiplier}. 

Following \cite[Section~12.1]{ABB}, the intrinsic Hessian at a critical
control $u$ is the quadratic map
\begin{equation}
 \operatorname{Hess}_u\cE:\ker\dd_u\cE
    \longrightarrow T_{\gamma_u(1)}M/\im\dd_u\cE,\qquad
 v\longmapsto \dd_u^2\cE[v,v].
\end{equation}
The second differential is computed in coordinates, but its class in the
quotient is independent of this choice. For an abnormal multiplier
$\lambda_1\in(\im\dd_u\cE)^\perp$, we associate the quadratic form
\begin{equation}
 q_{\lambda_1}(v):=
 \lambda_1\bigl(\operatorname{Hess}_u\cE(v)\bigr),
 \qquad v\in\ker\dd_u\cE.
\end{equation}
The \emph{negative Morse index}
$\operatorname{ind}_-q_{\lambda_1}$ is the supremum of the dimensions of
subspaces of $\ker\dd_u\cE\subset H$ on which $q_{\lambda_1}$ is negative
definite.

Length-minimizing trajectories are henceforth parametrized at constant
speed on $[0,1]$ and represented by their unique least-norm controls.
A trajectory $\gamma_u$ is called
\emph{abnormal} when its least-norm control $u$ is singular.
Following \cite{ASmorse}, its \emph{corank} and
\emph{Morse index} are
\begin{equation*}
 \operatorname{corank}(\gamma_u):=\operatorname{codim}\im\dd_u\cE,
 \qquad
 \operatorname{ind}(\gamma_u):=
 \min_{0\neq\lambda_1\in(\im\dd_u\cE)^\perp}
       \operatorname{ind}_-q_{\lambda_1}.
\end{equation*}
The minimum is well defined in $\mathbb N\cup\{0,+\infty\}$. Writing $k=\operatorname{corank}(\gamma_u)$, we have
\begin{equation*}
 \operatorname{ind}(\gamma_u)>\operatorname{corank}(\gamma_u)
 \quad\Longleftrightarrow\quad
 \operatorname{ind}_-q_{\lambda_1} > k
 \quad\text{for every }0\neq\lambda_1\in(\im\dd_u\cE)^\perp.
\end{equation*}
For regular trajectories there are no abnormal multipliers,
so the condition is vacuous.

\begin{definition}[Goh--Legendre abnormal lift]
\label{def:goh-legendre}
A control $u$ and its trajectory $\gamma_u$ admit a
\emph{Goh--Legendre abnormal lift} if there is a covector
$\lambda_1\in T^*_{\gamma_u(1)}M\setminus\{0\}$ such that the lift
\begin{equation}\label{eq:lift}
     \lambda(t):=(\Phi^u_{t,1})^*\lambda_1
 \in T^*_{\gamma_u(t)}M
\end{equation}
satisfies the following conditions for almost every $t\in[0,1]$:
\begin{align}
 \langle\lambda(t),X_v(\gamma_u(t))\rangle&=0,
   &&v\in\R^m,\label{eq:abnormal} \\
 \langle\lambda(t),[X_v,X_w](\gamma_u(t))\rangle&=0,
   &&v,w\in\R^m,\label{eq:goh}\\
 \bigl\langle\lambda(t),
 [[X_{u(t)},X_v],X_v](\gamma_u(t))\bigr\rangle&\geq0,
   &&v\in\R^m.\label{eq:legendre}
\end{align}
Equation \eqref{eq:goh} is the \emph{Goh condition}, while \eqref{eq:legendre} is the \emph{generalized
Legendre condition}.
\end{definition}

By \eqref{eq:endpoint-differential}, condition \eqref{eq:abnormal} is equivalent to
$\lambda_1\in(\im\dd_u\cE)^\perp$. A key fact is the following: if $u$ is energy-minimizing and $\operatorname{ind}_-q_{\lambda_1}<\infty$
for an abnormal multiplier $\lambda_1$, the lift \eqref{eq:lift} of that multiplier
satisfies the Goh and generalized Legendre conditions. See \cite[Theorem~12.12]{ABB} or \cite[Proposition~10.1]{ASmorse}.
Consequently, if an abnormal minimizing trajectory admits no such lift,
then $\operatorname{ind}_-q_{\lambda_1}=+\infty$ for every abnormal
multiplier $\lambda_1$, so its Morse index is $+\infty$.

\begin{lemma}
\label{lem:legendre-indefinite}
A Legendre-indefinite family (see Definition \ref{def:legendre-indefinite}) has no nonconstant horizontal trajectories
with a Goh--Legendre abnormal lift. Hence every nonconstant abnormal
length-minimizing trajectory has infinite Morse index.
\end{lemma}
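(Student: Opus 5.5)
We argue by contradiction. Suppose $\gamma_u$ is a nonconstant horizontal trajectory with a Goh--Legendre abnormal lift $\lambda(t)=(\Phi^u_{t,1})^*\lambda_1$, $\lambda_1\neq0$. Since the flow maps are diffeomorphisms, $\lambda(t)\neq0$ for all $t$.

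The plan is to find a single time $t$ at which all hypotheses of Definition~\ref{def:legendre-indefinite} hold with $x=\gamma_u(t)$, $u=u(t)$ and $\lambda=\lambda(t)$. At such a time the form $Q_{x,u(t),\lambda(t)}$ would be indefinite, which contradicts \eqref{eq:legendre}, since \eqref{eq:legendre} says exactly that $Q_{x,u(t),\lambda(t)}\geq0$.

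\textbf{Choice of $t$.} Because $\gamma_u$ is nonconstant, the set of $t$ where $\dot\gamma_u(t)=X_{u(t)}(\gamma_u(t))\neq0$ has positive measure. Intersect it with the full-measure set where \eqref{eq:abnormal}, \eqref{eq:goh} and \eqref{eq:legendre} hold. It remains to verify $\lambda(t)\in E_{\gamma_u(t)}(u(t))^\perp$.

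\textbf{Annihilation of $E$.} Conditions \eqref{eq:abnormal} and \eqref{eq:goh} give annihilation of $X_v$ and $[X_v,X_w]$ at $\gamma_u(t)$ for all $t$ in a full-measure set. By continuity in $t$ of $\lambda(t)$ and $\gamma_u(t)$, these identities hold for every $t$. For the third generator we differentiate the Goh identity. The lift satisfies the adjoint equation $\frac{d}{dt}\langle\lambda(t),Y(\gamma_u(t))\rangle=\langle\lambda(t),[X_{u(t)},Y](\gamma_u(t))\rangle$ for any smooth vector field $Y$. This is the standard formula from $\lambda(t)=(\Phi^u_{t,1})^*\lambda_1$ and the derivative of $(\Phi^u_{t,1})_*^{-1}Y$. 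The function $t\mapsto\langle\lambda(t),[X_v,X_w](\gamma_u(t))\rangle$ is absolutely continuous and identically zero, so its derivative vanishes a.e. Hence
\begin{equation}
\langle\lambda(t),[X_{u(t)},[X_v,X_w]](\gamma_u(t))\rangle=0
\end{equation}
for a.e.\ $t$. This holds simultaneously for all $v,w$: apply it to the finitely many basis pairs $(e_i,e_j)$ and use bilinearity. Therefore, for a.e.\ $t$ in our positive-measure set, $\lambda(t)\in E_{\gamma_u(t)}(u(t))^\perp\setminus\{0\}$. Legendre-indefiniteness then supplies $v_-$ with $Q(v_-)<0$, contradicting \eqref{eq:legendre}.

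\textbf{Second statement.} Let $\gamma_u$ be a nonconstant abnormal length-minimizer and $\lambda_1$ any abnormal multiplier. If $\operatorname{ind}_-q_{\lambda_1}$ were finite, then by \cite[Theorem~12.12]{ABB} the lift of $\lambda_1$ would be a Goh--Legendre abnormal lift, which we have just excluded. So $\operatorname{ind}_-q_{\lambda_1}=+\infty$ for every $\lambda_1$, and the minimum defining $\operatorname{ind}(\gamma_u)$ is $+\infty$.

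\textbf{Main technical point.} This is the a.e.\ differentiation step. Since $u$ is only $L^1$ (or $L^2$), one must justify absolute continuity and the adjoint formula in integrated form, and choose the exceptional null sets uniformly in $(v,w)$; finite-dimensionality handles the latter.

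\textbf{Invariance claim (Remark~\ref{rmk:invarianceassertion}).} Let $\tilde X_i=\sum_j a_{ji}X_j$ with $a$ smooth and invertible. Brackets pick up terms involving derivatives of $a$, but these lie in lower-order spans: $\Delta$ for $[\tilde X_v,\tilde X_w]$, and $\Delta+[\Delta,\Delta]$ for the triple bracket. So $E$ is preserved. For $Q$, the extra terms lie in $E$ and are killed by $\lambda$, so $Q$ transforms by the linear change of variable $v\mapsto a v$ and indefiniteness is preserved.
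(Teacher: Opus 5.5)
Your proof is correct and follows essentially the same route as the paper. Both arguments use the abnormal and Goh conditions to annihilate $\mathcal L^2(\gamma(t))$, then differentiate the vanishing Goh identity along the lift to annihilate the triple brackets on a common full-measure set (finitely many index pairs). This contradicts Legendre-indefiniteness at the times where $\dot\gamma\neq0$, and the infinite Morse index then follows from the Goh--Legendre necessary condition for finite index; your Leibniz-rule treatment of the invariance claim also matches the paper's.
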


\begin{proof}
First, we prove the invariance assertions in
Section~\ref{sec:legendre-indefinite} for $E_x(u)$ and $Q_{x,u,\lambda}$, see Remark \ref{rmk:invarianceassertion}. To this purpose, recall that
\begin{equation}
 \mathcal L^2(x)=\operatorname{span}\{X_i(x),[X_i,X_j](x):1\leq i,j\leq m\}.
\end{equation}
If $a,b,c$ are smooth $\R^m$-valued functions near $x$, with
$a(x)=u$, $b(x)=v$, and $c(x)=w$, the Leibniz rule gives
\begin{align*}
 [X_b,X_c](x)& =[X_v,X_w](x)
       &&\pmod{\operatorname{span}\{X_i(x)\}},\\
 [X_a,[X_b,X_c]](x)& = [X_u,[X_v,X_w]](x)
       &&\pmod{\mathcal L^2(x)},\\
 [[X_a,X_b],X_b](x)& = [[X_u,X_v],X_v](x)
       &&\pmod{\mathcal L^2(x)}.
\end{align*}
Here $X_a:=\sum_i a_iX_i$, and similarly for $X_b,X_c$.
In particular, replacing $X_i$ by
$\widetilde X_j:=\sum_i A_{ij}X_i$, where $A$ is a smooth invertible
matrix, gives
\begin{equation}
 E_x^{\widetilde X}(u)=E_x^X(A(x)u),\qquad
 Q^{\widetilde X}_{x,u,\lambda}(v)
   =Q^X_{x,A(x)u,\lambda}(A(x)v).
\end{equation}
This proves invariance, since $A(x)$ is invertible and
$\widetilde X_u(x)=X_{A(x)u}(x)$.  
If the generators have constant rank, choose a local frame of their
image distribution $\Delta$ and express them in that frame.  The same
Leibniz-rule calculations show that $E_x(u)$ and $Q_{x,u,\lambda}(v)$
depend only on $X_u(x)$ and $X_v(x)$, and give the stated expressions
on $\Delta_x$. 

We proceed now with the proof of the Lemma. By contradiction, suppose that $\gamma=\gamma_u$ admits a Goh--Legendre lift
$\lambda$. The abnormal and Goh conditions give, for almost every $t$,
\begin{equation}\label{eq:goh-annihilates-square}
 \lambda(t)\bigl(\mathcal L^2(\gamma(t))\bigr)=0.
\end{equation}
For each $i,j$, the absolutely continuous function
$t\mapsto\langle\lambda(t),[X_i,X_j](\gamma(t))\rangle$ vanishes
identically. Differentiating gives, for almost every $t$,
\begin{equation}
 0=\frac{\dd}{\dd t}\langle\lambda(t),[X_i,X_j](\gamma(t))\rangle
  =\langle\lambda(t),[X_{u(t)},[X_i,X_j]](\gamma(t))\rangle.
\end{equation}
Since there are finitely many pairs $i,j$, these identities hold on a
common set of full measure. Together with \eqref{eq:goh-annihilates-square},
they imply
\begin{equation*}
 \lambda(t)\in E_{\gamma(t)}(u(t))^\perp
\end{equation*}
at almost every time with $X_{u(t)}(\gamma(t))\neq0$.
The covector $\lambda(t)$ is nonzero by invertibility of the linearized
flow, and \eqref{eq:legendre} gives
$Q_{\gamma(t),u(t),\lambda(t)}\geq0$, contradicting
Legendre-indefiniteness. Thus $\dot\gamma=0$ almost everywhere and
$\gamma$ is constant. 
\end{proof}

\subsection{Privileged coordinates and nilpotent approximation}
\label{sec:privileged-preliminaries}

We follow Bella\"iche \cite{Bellaiche}. Fix $q_0\in M$,
set $n_r:=\dim\mathcal L^r(q_0)$ and $n_0:=0$, and define $w_j=r$
when $n_{r-1}<j\leq n_r$. Privileged coordinates are coordinates
$x=(x_1,\ldots,x_n)$ for which the function $x_j$ has nonholonomic order
$w_j$ at $q_0$.  Equivalently, every iterated derivative of $x_j$ with
respect to the fields $X_1,\ldots,X_m$ of length strictly less than $w_j$ vanishes at
$q_0$, while at least one of length $w_j$ is nonzero.

Choose such coordinates on a neighborhood of $\overline U$, where
$U$ is a relatively compact neighborhood of $q_0$.  Write
\begin{equation}
 x=(x_1,\ldots,x_n):U\longrightarrow x(U)\subset\R^n,
 \qquad x(q_0)=0,
\end{equation}
and define the anisotropic dilation map $\delta_r:\R^n\to\R^n$ by
\begin{equation*}
 \delta_r(x_1,\ldots,x_n)
  =(r^{w_1}x_1,\ldots,r^{w_n}x_n),\qquad r\geq 0.
\end{equation*}
Note that $\delta_0z =0$.  Unless otherwise specified, norms in the coordinate
space are Euclidean.

\begin{lemma}[Localization]\label{lem:localization}
There exists $r_{\mathrm{loc}}>0$ such that, for every
$q,q'\in \overline B_{q_0}(r_{\mathrm{loc}})$,
\begin{equation}
 d(q,q')
 =
 \inf\{\ell(\gamma):
       \gamma(0)=q,\ \gamma(1)=q',\
       \gamma([0,1])\subset U\}.
\end{equation}
\end{lemma}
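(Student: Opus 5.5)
The idea is to exploit two facts. First, the sub-Riemannian distance induces the manifold topology. Second, curves of length below a threshold cannot leave $U$ once they start close to $q_0$. Since $U$ is a neighborhood of $q_0$ and $d$ is continuous and induces the topology, there is $\rho>0$ with $\overline B_{q_0}(\rho)\subset U$. In fact the open ball $B_{q_0}(\rho)$ is contained in $U$: every point at distance $<\rho$ from $q_0$ lies in $\overline B_{q_0}(\rho)\subset U$. We set $r_{\mathrm{loc}}:=\rho/4$.

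Fix $q,q'\in\overline B_{q_0}(r_{\mathrm{loc}})$. The inequality ``$\leq$'' is trivial, since the right-hand side is an infimum over a smaller class of curves. For ``$\geq$'', we have $d(q,q')\leq 2r_{\mathrm{loc}}=\rho/2$ by the triangle inequality. Take any horizontal $\gamma$ from $q$ to $q'$ with $\ell(\gamma)<d(q,q')+\eta$, where $0<\eta<\rho/4$. Then $\ell(\gamma)<3\rho/4$. For every $t$, the restriction of $\gamma$ to $[0,t]$ is a horizontal curve from $q$ to $\gamma(t)$, so $d(q,\gamma(t))\leq\ell(\gamma|_{[0,t]})\leq\ell(\gamma)<3\rho/4$. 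Hence
\[
d(q_0,\gamma(t))\leq d(q_0,q)+d(q,\gamma(t))<\rho/4+3\rho/4=\rho .
\]
Therefore $\gamma([0,1])\subset B_{q_0}(\rho)\subset U$. So every almost-minimizing curve is admissible for the restricted infimum, which is thus at most $d(q,q')+\eta$. Letting $\eta\to0$ gives the equality.

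The only point that needs care is the existence of $\rho$. This uses that $d$ induces the manifold topology, i.e.\ the Chow--Rashevskii theorem as recalled above. That result provides a small metric ball inside the given open neighborhood $U$; no use of privileged coordinates is needed. A further subtlety is that $d$ is defined through curves on all of $M$, while the estimate $d(q,\gamma(t))\leq\ell(\gamma|_{[0,t]})$ holds directly from the definition, after reparametrizing the restricted curve to $[0,1]$. No other obstacle is expected.
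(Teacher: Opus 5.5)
Your proof is correct and is essentially the paper's argument. Your choice of $\rho$ with $\overline B_{q_0}(\rho)\subset U$ and $r_{\mathrm{loc}}=\rho/4$ matches the paper's choice $B_{q_0}(4r_{\mathrm{loc}})\subset U$, and the triangle-inequality bound $d(q_0,\gamma(t))<4r_{\mathrm{loc}}$ for almost-minimizing curves, followed by $\eta\downarrow0$, is exactly the paper's step.
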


\begin{proof}
It is sufficient to prove the $\geq$ inequality, since the other direction is immediate. Choose $r_{\mathrm{loc}}>0$ so that
$B_{q_0}(4r_{\mathrm{loc}})\subset U$. For
$q,q'\in\overline B_{q_0}(r_{\mathrm{loc}})$ and
$0<\eta<r_{\mathrm{loc}}$, choose a horizontal curve $\gamma$ from $q$
to $q'$ with $\ell(\gamma)\leq d(q,q')+\eta<3r_{\mathrm{loc}}$. Then
\begin{equation}
 d(q,\gamma(t))\leq\ell(\gamma|_{[0,t]})\leq\ell(\gamma),
\end{equation}
so $d(q_0,\gamma(t))<4r_{\mathrm{loc}}$ and $\gamma$ lies in $U$.
Letting $\eta\downarrow0$ proves the inequality.
\end{proof}

In particular, balls and spheres centered at $q_0$ of radius less than
$r_{\mathrm{loc}}$  lie in $U$.  In coordinates they lie in the relatively
compact neighborhood $x(U)$.

When working in coordinates, we identify each field $X_i$ with its
push-forward $x_*X_i$.  Each $X_i$ has a weighted Taylor expansion.
Its homogeneous component of weighted degree $-1$, denoted by
$\widehat X_i$, is a polynomial vector field defined on all of $\R^n$. In particular, $(\delta_{1/s})_*\widehat X_i=s^{-1}\widehat X_i$. The family
$\widehat X_1,\ldots,\widehat X_m$ is the \emph{nilpotent approximation} at
$q_0$; the Lie algebra generated by these homogeneous fields is nilpotent.
 The corresponding
Carnot--Carath\'eodory distance $\widehat d$ satisfies
\begin{equation}
 \widehat d(0,\delta_rz)=r\widehat d(0,z),\qquad z\in \R^n,\, r\geq 0.
\end{equation}
The sub-Riemannian structure on $\R^n$ defined by the family $\widehat X_1,\ldots,\widehat X_m$ also takes the name of nilpotent approximation, and it is isometric to the metric tangent space at $q_0$. It is a Carnot group at regular points and a Carnot homogeneous space in general, see \cite{Bellaiche} for details.

\subsection{Approximating families}\label{sec:blowup}

For $s>0$ put
\begin{equation}
 U_s:=\delta_{1/s}x(U),
\end{equation}
and set $U_0:=\R^n$.  Note that, for every compact set $K\subset\R^n$, there exists
$s_K>0$ such that $K\subset U_s$ whenever
$0<s\leq s_K$.  On $U_s$, define the rescaled vector fields
\begin{equation}\label{eq:scaled-fields}
 X_i^s:=s\,(\delta_{1/s})_*X_i,
 \qquad i=1,\ldots,m,
\end{equation}
and set $X_i^0:=\widehat X_i$.  Let $d_s$ be the
Carnot--Carath\'eodory distance defined by
$X_1^s,\ldots,X_m^s$ on $U_s$.

\begin{lemma}\label{lem:blowup}
The following assertions hold.
\begin{enumerate}[label=\textup{(\roman*)}]
\item The maps $s\mapsto X_i^s$ extend smoothly to
  $s=0$ in the $C^\infty$ topology on compact subsets, with value
  $X_i^0=\widehat X_i$.

\item The distances $d_s$ converge to $d_0=\widehat d$ uniformly
  on compact subsets of $\R^n\times\R^n$ as
  $s\downarrow0$.

\item For $s>0$ and $z,z'\in U_s$ such that $s d_s(0,z),    s d_s(0,z')\leq r_{\mathrm{loc}}$, one has
  \begin{equation}\label{eq:distance-blowup}
    d_s(z,z') =  \frac{1}{s} d\bigl(x^{-1}(\delta_s z),x^{-1}(\delta_s z')\bigr).
  \end{equation}

\item For $s\geq0$, $r>0$, and $z,z'\in U_{sr}$,
  \begin{equation}\label{eq:two-parameter-scaling}
    d_s(\delta_r z,\delta_r z')
       =r\,d_{sr}(z,z').
  \end{equation}
\end{enumerate}
\end{lemma}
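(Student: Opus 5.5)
The plan is to prove the four items in the order (i), (iv), (iii), (ii): each is either a direct computation with the dilations or a consequence of the previous items together with standard continuity of Carnot--Carath\'eodory distances under $C^\infty$ perturbation of the generating fields.

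\textbf{Item (i).} I would write $X_i$ in privileged coordinates through its weighted Taylor expansion. Writing $X_i=\sum_j a_{ij}(x)\partial_{x_j}$, privileged coordinates guarantee that $a_{ij}$ has weighted order at least $w_j-1$. Hence $a_{ij}(x)=\sum_{|\alpha|_w\geq w_j-1} c_\alpha x^\alpha$ in the sense of Taylor expansion with weighted remainder. Computing directly,
\begin{equation}
 X_i^s(z)=\sum_j s^{1-w_j}\,a_{ij}(\delta_s z)\,\partial_{z_j}.
\end{equation}
Taylor's formula with integral remainder writes $s^{1-w_j}a_{ij}(\delta_s z)$ as the homogeneous part of degree $w_j-1$, which is the coefficient of $\widehat X_i$, plus $s$ times a function smooth in $(s,z)$. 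Near $s=0$, this function is well-defined on compact sets because $\delta_s z\in x(U)$ for small $s$. This gives a smooth extension in $s$ jointly with $z$, hence in $C^\infty$ on compacts.

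\textbf{Item (iv).} This is pure scaling: $\delta_r$ pushes $X_i^{sr}$ to $r^{-1}X_i^s$. Indeed, $(\delta_r)_*X_i^{sr}=sr(\delta_r)_*(\delta_{1/(sr)})_*X_i=r\,X_i^s$ up to the correct bookkeeping, so it suffices to check the exponent. For $s=0$ I would use the homogeneity of $\widehat X_i$ instead. Consequently, horizontal curves correspond under $\delta_r$, with lengths multiplied by $r$, and the domains match since $\delta_r U_{sr}=U_s$.

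\textbf{Item (iii).} Similarly, $\delta_s$ maps $X_i^s$ to $sX_i$ on $x(U)$, so $d_s(z,z')$ equals $s^{-1}$ times the distance of $X_1,\ldots,X_m$ restricted to curves in $U$. Lemma~\ref{lem:localization} identifies this restricted distance with $d$ whenever both endpoints lie in $\overline B_{q_0}(r_{\mathrm{loc}})$. The hypothesis $s\,d_s(0,z)\leq r_{\mathrm{loc}}$ gives $d_U(q_0,q)\leq r_{\mathrm{loc}}$ for the restricted distance $d_U$, hence $d(q_0,q)\leq r_{\mathrm{loc}}$ because $d\leq d_U$.

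\textbf{Item (ii).} This is the main obstacle. I would follow Bella\"iche's convergence argument, using the uniform-in-$s$ ball-box estimates available from (i). Fix a compact $K$.

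For the upper bound, I would take a $\widehat d$-minimizer between points of $K$, with control $u$. Using the same control for $X^s$ and smooth dependence from (i), the endpoints differ by $o(1)$. I would then correct the endpoint with a short curve. The uniform estimate $d_s(z,z')\leq C|z-z'|^{1/r}$ for $s$ small, on a neighborhood of $K$, makes this correction short; it follows from the uniform openness of the endpoint maps of bracket commutator products, whose constants depend continuously on the fields in $C^r$. This gives $\limsup_{s\to0} d_s\leq\widehat d$ uniformly on $K$.

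For the lower bound, I would take near-minimizers for $d_s$, with uniformly bounded controls in $L^2$. These have a weakly convergent subsequence, and the trajectories converge uniformly by the $C^1$ convergence of the fields, since the endpoint maps are weakly continuous. Lower semicontinuity of the $L^2$ norm then gives $\liminf_{s\to0} d_s\geq\widehat d$.

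Uniformity on $K\times K$ would follow by contradiction with sequences, using equicontinuity from the same Hölder estimate.
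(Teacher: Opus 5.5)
Your proposal is correct and essentially matches the paper's proof. Items (i), (iii) and (iv) go exactly as in the paper: the weighted Taylor expansion in privileged coordinates, and the dilation correspondence of horizontal curves, together with Lemma~\ref{lem:localization} and $d\leq d_U$ for (iii). For (ii) the paper simply cites \cite[Theorem~7.32]{Bellaiche} and \cite[Theorem~10.65]{ABB}, and your sketch is the standard argument behind those results. It compares trajectories with the same control and corrects the endpoint using a uniform H\"older estimate for the upper bound, and uses weak compactness for the lower bound.

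There is one slip in (iv): the correct identity is $(\delta_r)_*X_i^{sr}=rX_i^s$, not $r^{-1}X_i^s$. Your displayed computation and your conclusion that lengths are multiplied by $r$ already use the correct identity.
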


\begin{proof}
We sketch the proof, see \cite{Bellaiche} or \cite[Chapter 10]{ABB} for details. In privileged coordinates, the coefficients of $X_i$ have weighted Taylor
expansions, and in particular formula \eqref{eq:scaled-fields} does not change the terms of weighted
degree $-1$ and multiplies the higher-order terms by nonnegative integral
powers of $s$. More precisely, for every compact set $K\subset\R^n$ there exists
$s_K>0$ such that
\begin{equation}
  X_i^s(z)=\widehat X_i(z)+sW_i^s(z),
  \qquad 0\leq s\leq s_K,\quad z\in K,
\end{equation}
where the $W_i^s$ are smooth vector fields on a neighborhood of $K$
and depend smoothly on $s$. This proves (i).  Assertion (ii) is the standard local uniform
convergence theorem for the nilpotent approximation; see
\cite[Theorem 7.32]{Bellaiche} or \cite[Theorem 10.65]{ABB}.
For a general convergence theorem for distances associated to varying
fields, see \cite[Theorem~1.4]{AntonelliLeDonneNicolussiGolo}.

We prove (iii). Fix $s>0$ and $z,z'\in U_s$, and set 
\begin{equation}
 q:=x^{-1}(\delta_s z),
 \qquad
 q':=x^{-1}(\delta_s z').
\end{equation}
We omit the coordinate map from the notation, for simplicity. Let $\ell_s$ denote the sub-Riemannian length associated with the family $(X_i^s)$.
A horizontal curve $\gamma$ contained in $U$ is horizontal for the original family of vector fields, with control $u$, if and only if the trajectory $\widetilde\gamma:=\delta_{1/s}\circ \gamma$ is horizontal for the family $(X_i^s)$, with control $u/s$. This bijection of horizontal curves preserves least-norm controls
up to the factor $1/s$, hence
\begin{equation}
 \ell_s(\delta_{1/s}\circ\gamma)=s^{-1}\ell(\gamma).
\end{equation}
Taking infima over curves gives
\begin{equation}\label{eq:local-distance-rescaling}
 s d_s(z,z')
 =\inf\{\ell(\gamma):\gamma(0)=q,\ \gamma(1)=q',\
                         \gamma([0,1])\subset U\}.
\end{equation}

This implies, in particular, that $d(q_0,q)\leq s d_s(0,z)\leq r_{\mathrm{loc}}$, and similarly $d(q_0,q')\leq s d_s(0,z')\leq r_{\mathrm{loc}}$. Thus $q,q'\in\overline B_{q_0}(r_{\mathrm{loc}})$, and
Lemma~\ref{lem:localization}, together with
\eqref{eq:local-distance-rescaling}, yields
\eqref{eq:distance-blowup}.

Finally, for $s>0$ a direct computation from
\eqref{eq:scaled-fields} gives
\begin{equation}
  X_i^{sr}=r\,(\delta_{1/r})_*X_i^s.
\end{equation}
Since $\delta_rU_{sr}=U_s$, the same length-rescaling argument gives
$\ell_s(\delta_r\circ\gamma)=r\ell_{sr}(\gamma)$.
Taking infima proves (iv). For $s=0$, the identity follows from
the homogeneity of $\widehat X_i$.
\end{proof}

\section{Uniform Lipschitz estimates for the approximating distances}
\label{sec:lipschitz}

We prove uniform Lipschitz estimates by bounding the proximal
subdifferentials of the squared distances, using the second variation of
the endpoint maps along minimizing controls.
In the setting of Section~\ref{sec:blowup}, for $s\geq0$, let $\mathcal U_s\subset H$ be the open set of controls
whose $X^s$-trajectory from $0$ is defined in $U_s$ on $[0,1]$. Furthermore, set
\begin{equation}
 \cE_s:\mathcal U_s\longrightarrow U_s,\qquad
 \cE_s(u)=\gamma_u^s(1),\qquad
 \dot\gamma_u^s=\sum_{i=1}^m u_iX_i^s(\gamma_u^s),\quad
 \gamma_u^s(0)=0.
\end{equation}
For $z\in U_s$, \eqref{eq:energy-distance-general} gives
\begin{equation}
 f_s(z):=\tfrac12d_s(0,z)^2
        =\inf\{\cJ(u):u\in\mathcal U_s,\ \cE_s(u)=z\}.
\end{equation}

By Section~\ref{sec:endpoint-preliminaries}, the hypothesis of
Theorem~\ref{thm:main} is equivalent to requiring
$\operatorname{ind}_-(\lambda_1\circ\operatorname{Hess}_u\cE_0) > k$,
where $k=\operatorname{codim}\im\dd_u\cE_0$, for every energy-minimizing
control $u$ with $\cE_0(u)\neq0$ and every abnormal multiplier
$\lambda_1$.

The proof of the uniform Lipschitz estimates below requires uniform control of the trajectories and endpoint maps as the approximating system varies with \(s\). The following lemma collects the corresponding estimates.
\begin{lemma}\label{lem:compact-containment}
For every $A>0$, there exist $s_A>0$ and a compact set
$\Omega_A\subset\R^n$, with $\Omega_A\Subset U_s$ for
$0\leq s\leq s_A$, such that every control
$u\in H$ with $\|u\|_H\leq A$ has an
$X^s$-trajectory from $0$ defined on $[0,1]$ and contained in $\Omega_A$.
Moreover, there is $C_A>0$ such that
\begin{equation}\label{eq:ode-parameter-bound}
 \sup_{\tau\in[0,1]}|\gamma_u^s(\tau)-\gamma_u^t(\tau)|
 \leq C_A|s-t|,
 \qquad s,t\in[0,s_A],\quad \|u\|_H\leq A.
\end{equation}
Furthermore, the endpoint maps $\cE_s$, $s\in[0,s_A]$,
are defined on a common open neighborhood in $H$ of the closed ball
$\|u\|_H\leq A$ and satisfy
\begin{equation}
 \sup_{\|u\|_H\leq A}\left(
 |\cE_s(u)-\cE_0(u)|
 +\sum_{j=1}^2\|\dd_u^j\cE_s-\dd_u^j\cE_0\|_{\mathrm{op}}
 \right)\longrightarrow0\quad\text{as }s\downarrow0,
\end{equation}
where $|\cdot|$ is the Euclidean norm and $\|\cdot\|_{\mathrm{op}}$ denotes
the operator norm of linear or bilinear maps.
\end{lemma}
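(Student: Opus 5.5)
The plan is to treat the family $X^s_i$, $s\in[0,s_K]$, as a single smooth family of vector fields depending on the parameter $s$, and to get everything from Gronwall estimates plus smooth dependence of ODE solutions on parameters and controls. By Lemma~\ref{lem:blowup}(i), on any compact $K\subset\R^n$ and for $0\le s\le s_K$ we have $X^s_i=\widehat X_i+sW^s_i$, with $W^s_i$ smooth in $(s,z)$. I will first fix a bounded region that contains the nilpotent trajectories. Since the $\widehat X_i$ are polynomial and homogeneous, the trajectories of the nilpotent system stay bounded for controls with $\|u\|_H\le A$. This follows from the graded triangular structure: the coefficient of $\partial_{z_j}$ in $\widehat X_i$ depends only on coordinates of weight $<w_j$. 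Integrating weight by weight and using Cauchy--Schwarz, $\|u\|_{L^1}\le\|u\|_{L^2}\le A$, gives a bound $|\gamma^0_u(\tau)|\le R_A$, uniform over $\tau\in[0,1]$ and $\|u\|_H\le A$. Set $K:=\{|z|\le R_A+1\}$, and define $\Omega_A$ to be the closed ball of radius $R_A+1$; this is the natural candidate.

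Next I compare $\gamma^s_u$ with $\gamma^0_u$. Let $L$ be a Lipschitz constant in $z$ and $M$ a sup bound for the $W^s_i$ on $K$, uniform in $s\le s_K$. As long as $\gamma^s_u$ stays in $K$,
\begin{equation}
 |\dot\gamma^s_u-\dot\gamma^0_u|\le |u(\tau)|\bigl(L|\gamma^s_u-\gamma^0_u|+sM\bigr).
\end{equation}
Gronwall with weight $|u|$ then gives $|\gamma^s_u-\gamma^0_u|\le sMA\,e^{LA}$. A continuity (bootstrap) argument follows: if $s\le s_A$ with $s_AMAe^{LA}<1$, the trajectory $\gamma^s_u$ never leaves $\Omega_A$, so it is defined on $[0,1]$. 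Shrinking $s_A$ further gives $\Omega_A\subset U_s$ (using $s_K$). The same Gronwall argument applied to $\gamma^s_u-\gamma^t_u$ yields \eqref{eq:ode-parameter-bound}. Here we use that $\partial_sX^s_i$ is bounded on $K\times[0,s_A]$, so $|X^s_i-X^t_i|\le C|s-t|$.

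For the common domain, trajectories that stay in the interior of the slightly larger compact set $K$ persist under small $H$-perturbations of $u$. This again follows from Gronwall, now with $\|u-u'\|_{L^1}\le\|u-u'\|_H$. Using $\Omega_A$ with margin $1/2$ inside $K$, each $\cE_s$ is therefore defined on $\{\|u\|_H<A+\eta\}$ for a small uniform $\eta$.

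The main step is uniform convergence of the first and second differentials. I would use the variational equations: $\dd_u\cE_s(v)=\xi(1)$, where $\dot\xi=\sum u_iD X^s_i(\gamma)\xi+\sum v_iX^s_i(\gamma)$ and $\xi(0)=0$. Similarly, $\dd^2_u\cE_s[v,v]=\zeta(1)$, where $\zeta$ solves the linear equation with the same homogeneous part, forced by $\sum u_iD^2X^s_i[\xi,\xi]+2\sum v_iDX^s_i\xi$. All coefficients involve derivatives up to order $2$ of $X^s_i$ evaluated along $\gamma^s_u$. These coefficients differ from their $s=0$ versions by $O(s)$ in sup norm on $K$, since the fields are $C^3$ jointly in $(s,z)$, and the trajectories differ by $O(s)$. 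Gronwall estimates with $L^1$ weights $|u|$ and $|v|\le\|v\|_H$ give $|\xi^s-\xi^0|\le C_As\|v\|_H$ and $|\zeta^s-\zeta^0|\le C_As\|v\|_H^2$. Polarization handles the bilinear form. This gives a convergence rate $O(s)$, uniform over $\|u\|_H\le A$.

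I expect the only delicate points to be bookkeeping. First, all estimates must use only $L^1$ or $L^2$ bounds on $u$ and $v$, never pointwise bounds. Second, the constants must be uniform, which comes from fixing $K$ before choosing $s_A$.
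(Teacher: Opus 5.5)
Your proposal is correct and follows essentially the same route as the paper. Both arguments bound the nilpotent trajectories using the graded triangular structure, then use a Gronwall/bootstrap comparison to keep $\gamma_u^s$ inside $\Omega_A$ and to obtain the $|s-t|$ estimate, and finally apply Gronwall to the first and second variational equations to get the $C^2$ convergence. The differences are cosmetic. The paper gets the common domain by rerunning the argument with bound $A+1$, which would also fix your slip of setting $K=\Omega_A$ and then asking for a margin of $1/2$ between them. The paper also records only $\varepsilon_s\to0$, where you give an explicit $O(s)$ rate.
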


\begin{proof}
The coefficient of $\partial_{z_j}$ in $\widehat X_i$ is a polynomial
of weighted degree $w_j-1$. It therefore depends only on coordinates of
weight strictly less than $w_j$.  In particular, if $\gamma_u^0$ is a horizontal trajectory for the nilpotent approximation with control $u \in H$, for the coordinates $z_1,\dots,z_{n_1}$ of weight one along such curve it holds $\dot{z}_j = \sum_{i=1}^m u_i L_{ij}$, for a constant $m\times n_1$ matrix $L$. Therefore, these coordinates are bounded by a constant times $\|u\|_{L^1}\leq \|u\|_H\leq A$.
Inductively, once all lower-weight coordinates are bounded, the
coefficients in the equations for the next weight are bounded as well;
integrating them against $u$ gives a bound depending only on $A$.
Every trajectory of the nilpotent approximation with $\|u\|_H\leq A$
therefore exists on $[0,1]$ and lies in a compact set $Q_A$ depending only
on $A$.

Choose a larger compact set $\Omega_A$ with
$Q_A\subset\operatorname{int}\Omega_A$, and choose $s_A>0$ so that a
compact neighborhood of $\Omega_A$ lies in $U_s$ for every
$0\leq s\leq s_A$.
On $\Omega_A$, the fields $X_i^s$ and their first derivatives are
uniformly bounded.  Put
$e_s:=\max_i\|X_i^s-X_i^0\|_{C^0(\Omega_A)}$.
Up to the first exit time of $\gamma_u^s$ from $\Omega_A$, Gronwall's inequality gives
\begin{equation}
 \sup_{\tau}|\gamma_u^s(\tau)-\gamma_u^0(\tau)|
 \leq C A e_s\exp(CA).
\end{equation}
The constant $C$ is independent of $s$ and of controls with $\|u\|_H\leq A$.
Since $e_s\to0$, after decreasing $s_A$ the right-hand side is smaller than
$\operatorname{dist}(Q_A,\partial\Omega_A)$ for every $s\in[0,s_A]$.
Hence the trajectory cannot leave $\Omega_A$, and the standard continuation
argument gives existence on $[0,1]$.
Smooth dependence of the fields on $s$ gives
$\max_i\|X_i^s-X_i^t\|_{C^0(\Omega_A)}\leq C|s-t|$ for $s,t\in[0,s_A]$;
the same Gronwall estimate proves \eqref{eq:ode-parameter-bound}.

Applying the above argument with bound $A+1$ and decreasing
$s_A$ accordingly, all endpoint maps $\cE_s$, $s\in[0,s_A]$, are defined on
$\{u\in H:\|u\|_H<A+1\}$, a common open neighborhood of the closed ball
$\|u\|_H\leq A$.
By Lemma~\ref{lem:blowup}(i), the fields $X_i^s$ converge to $X_i^0$ in
$C^2$ on $\Omega_{A+1}$; in particular, their derivatives up to order two
are uniformly bounded there.
Fix $u$ with $\|u\|_H\leq A$.
Taking $t=0$ in \eqref{eq:ode-parameter-bound} gives
\begin{equation}
 \|\gamma_u^s-\gamma_u^0\|_{C^0}\leq C_A s
\end{equation}
uniformly in $u$.
For $v,w\in H$, set $V_v^s:=\dd_u\gamma_u^s[v]$ and
$W_{v,w}^s:=\dd_u^2\gamma_u^s[v,w]$.
Differentiating the integral equation corresponding to \eqref{eq:control-system} with respect to the control gives
\begin{align}
 \dot V_v^s
 &=\sum_i u_i\,\dd_{\gamma_u^s}X_i^s(V_v^s)
   +\sum_i v_iX_i^s(\gamma_u^s),\\
 \dot W_{v,w}^s
 &=\sum_i u_i\,\dd_{\gamma_u^s}X_i^s(W_{v,w}^s)
   +\sum_i u_i\,\dd_{\gamma_u^s}^2X_i^s[V_v^s,V_w^s]
 +\sum_i v_i\,\dd_{\gamma_u^s}X_i^s(V_w^s)
   +\sum_i w_i\,\dd_{\gamma_u^s}X_i^s(V_v^s),
\end{align}
with $V_v^s(0)=W_{v,w}^s(0)=0$. Uniform bounds on $\Omega_{A+1}$ and
Gronwall's inequality give
\begin{equation}
 \|V_v^s\|_{C^0}\leq C_A\|v\|_H,
 \qquad
 \|W_{v,w}^s\|_{C^0}\leq C_A\|v\|_H\|w\|_H.
\end{equation}
Considering the differences $V_v^s-V_v^0$ and $W_{v,w}^s-W_{v,w}^0$, using the $C^2$ convergence of the fields on $\Omega_{A+1}$,
the uniform convergence $\gamma_u^s\to\gamma_u^0$, and the bounds above, Gronwall's inequality gives
\begin{equation}
 \|V_v^s-V_v^0\|_{C^0}\leq\varepsilon_s\|v\|_H,\qquad  \|W_{v,w}^s-W_{v,w}^0\|_{C^0}
 \leq\varepsilon_s\|v\|_H\|w\|_H.
\end{equation}
Here $\varepsilon_s\to0$ as $s\downarrow 0$ is independent
of $u,v,w$ with $\|u\|_H\leq A$. Since
$\cE_s(u)=\gamma_u^s(1)$, $\dd_u\cE_s[v]=V_v^s(1)$ and
$\dd_u^2\cE_s[v,w]=W_{v,w}^s(1)$, taking the supremum over unit vectors
$v,w\in H$ proves the asserted uniform $C^2$ convergence in operator norm.
\end{proof}

For a fixed sub-Riemannian structure, Lipschitz regularity of the distance is established in
\cite[Theorem~5.7]{AgrachevLee} and \cite[Theorem~12.11]{ABB}, under Morse index assumptions.
Here an assumption only on the limiting nilpotent structure yields
estimates uniform along the approximating family, together with
Lipschitz dependence on the approximation parameter. 
It is worth noting that the uniform estimates below do not follow directly from the fixed-structure results by a compactness argument. Indeed, although the endpoint maps converge in $C^2$, their intrinsic Hessians are restricted to $\ker \dd_u\mathcal \cE_s$, and these kernels need not vary continuously, since the rank of $\dd_u\mathcal \cE_s$ may jump at $s=0$. The argument in Step $4$ of the proof overcomes this difficulty.

\begin{proposition}[Uniform Lipschitz estimates]
\label{prop:uniform-stability}
Assume the hypothesis of Theorem~\ref{thm:main}.  For every compact set
$K\subset\R^n\setminus\{0\}$ there exist $s_K>0$ and $L_K>0$ such that,
for all $s,t\in[0,s_K]$,
\begin{align}
 |d_s(0,y)-d_s(0,z)|&\leq L_K|y-z|,
 &&y,z\in K, \label{eq:lip1}\\
 |d_s(0,z)-d_t(0,z)|&\leq L_K|s-t|,
 &&z\in K. \label{eq:lip2}
\end{align}
\end{proposition}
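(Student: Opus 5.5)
We bound the proximal subdifferentials of $f_s(z)=\tfrac12 d_s(0,z)^2$ uniformly in $s$ and $z$ near $K$, and then conclude with Proposition~\ref{prop:subdifferential-lipschitz} and Lemma~\ref{lem:proximal-approximation}. To get \eqref{eq:lip2}, we treat $s$ as an extra variable. The function $F(s,z):=f_s(z)$ on $(-\sigma,s_K]\times O$ is the value function of the family $\cE_s$, and we bound its proximal subdifferentials in the $(s,z)$ variables jointly. Since $d_s(0,\cdot)$ is bounded below on a neighbourhood of $K$ uniformly for small $s$ (Lemma~\ref{lem:blowup}(ii)), Lipschitz bounds on $F$ transfer to $\sqrt{2F}$. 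Convexity is handled by covering $K$ by finitely many small convex neighbourhoods. The two-point estimate \eqref{eq:lip1} then follows by the standard chaining/compactness argument, since on each convex piece we have a uniform local Lipschitz constant and $d_s$ is uniformly continuous.

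\textbf{Step 1 (minimizers and multipliers).} Fix $A$ larger than $\sup\{\sqrt{2f_s(z)}\}$ over a neighbourhood $O$ of $K$ and $s\le s_A$, using Lemma~\ref{lem:blowup}(ii). By Lemma~\ref{lem:compact-containment} all relevant controls lie in the ball $\|u\|_H\le A$, on which the $\cE_s$ are uniformly $C^2$-close to $\cE_0$. By weak compactness and lower semicontinuity of $\cJ$, the infimum defining $f_s(z)$ is attained at some $u_{s,z}$. Let $(p_s,p_z)$ be a proximal subgradient of $F$ at $(s,z)$ with $C^2$ support $\varphi$. Then $u\mapsto \cJ(u)-\varphi(s',\cE_{s'}(u))$ has a local minimum at $(s,u_{s,z})$ over $(s',u)$. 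First order in $u$ gives $u=(\dd_u\cE_s)^*p_z$, so $p_z$ is a \emph{normal} multiplier. First order in $s'$ gives $p_s=-p_z\cdot\partial_s\cE_s(u)$, which is bounded by $|p_z|\,C_A$ by \eqref{eq:ode-parameter-bound}. It therefore suffices to bound $|p_z|$ uniformly.

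\textbf{Step 2 (second order).} The second-order condition in $u$ says that the form $v\mapsto \|v\|^2-p_z\,\dd^2_u\cE_s[v,v]-\dd^2\varphi(\dd_u\cE_s v,\dd_u\cE_s v)$ is nonnegative on $H$. In particular, $\|v\|^2-p_z\dd^2_u\cE_s[v,v]\ge -C|\dd_u\cE_s v|^2$, where $C$ is the $C^2$ bound of the support function. This is the delicate point, because $C$ depends on $\varphi$. To avoid the dependence, we argue by contradiction directly: suppose there are sequences $s_j$, $z_j\in K$, and proximal gradients with $|p_j|\to\infty$. Normalize $\lambda_j=p_j/|p_j|$ and pass to limits $s_j\to s_\infty$, $z_j\to z_\infty\ne0$, $u_j\rightharpoonup u$, $\lambda_j\to\lambda$. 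The relation $u_j=|p_j|(\dd\cE_{s_j})^*\lambda_j$ with $u_j$ bounded forces $\lambda\perp\im \dd_u\cE_{s_\infty}$. Weak-to-strong issues are handled by noting that $\dd\cE$ is compact and that $\|u_j\|\to\|u\|$ by minimality. Hence $u$ is an abnormal minimizer with multiplier $\lambda$. When $s_\infty=0$, the hypothesis gives $\operatorname{ind}_-\lambda\circ\operatorname{Hess}_u\cE_0>k$. When $s_\infty>0$, we reduce by scaling via Lemma~\ref{lem:blowup}(iv) to small parameters, or we shrink $s_K$ so that only $s_\infty=0$ can occur.

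\textbf{Step 3 (the main obstacle: kernels jump).} We need to transfer the negative index of the limit to the approximants $\cE_{s_j}$ at $u_j$, whose differentials may have larger rank. Take a subspace $V\subset\ker\dd_u\cE_0$ of dimension $k+1$ on which $\lambda\circ\dd^2\cE_0$ is negative definite. Its image under $\dd_{u_j}\cE_{s_j}$ is small, but it need not lie in the kernel. The key is that the nonnegativity in Step 2 only holds modulo a term controlled by $|\dd\cE v|^2$ times the unknown constant. Instead, we use the full structure: restrict to $V_j:=V\cap\ker(\pi_j\circ\dd_{u_j}\cE_{s_j})$, where $\pi_j$ projects onto a $k$-dimensional complement of the limiting image. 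Then $\dim V_j\ge1$, and for $v\in V_j$ the image $\dd_{u_j}\cE_{s_j}v$ lies in the (almost) image directions where the inverse is bounded. Correcting $v$ by a bounded-norm element $w$ with $\dd\cE_{s_j}(v+w)=0$, where $\|w\|$ is small because it is of the order of $|\dd\cE_{s_j}v|\to0$, places $v+w$ in $\ker\dd_{u_j}\cE_{s_j}$. There Step 2 gives $\|v+w\|^2\ge |p_j|\lambda_j\dd^2\cE_{s_j}[v+w,v+w]$. The right side is $\approx |p_j|\cdot(\text{negative definite value})\to-\infty$, while the left side is bounded: a contradiction. This counting is where the excess index $>k$ is used, and making the kernel correction uniform via the $C^2$ convergence of Lemma~\ref{lem:compact-containment} is the step I expect to require the most care. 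The resulting bound $|p|\le L$, combined with Lemma~\ref{lem:proximal-approximation} and Proposition~\ref{prop:subdifferential-lipschitz}, yields \eqref{eq:lip1} and \eqref{eq:lip2}.
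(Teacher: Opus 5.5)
Your plan follows the paper's Step~4 closely: normalized proximal subgradients, a limiting abnormal multiplier for a nilpotent minimizer, restriction of the second-order condition to $\ker\dd_{u_j}\cE_{s_j}$ so that the $\dd^2\varphi$ term drops out, and a $(k+1)$ versus $k$ dimension count. However, the decisive step in your Step~3 fails as written, for two reasons.

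\textbf{The sign of the contradiction.} With your normalization $\lambda_j=p_j/|p_j|$, the second-order condition on the kernel reads $|p_j|\,\lambda_j(\dd^2_{u_j}\cE_{s_j}[v,v])\le\|v\|_H^2$. If $\lambda\circ\dd^2_u\cE_0$ is negative definite on $V$, the right-hand side of your inequality tends to $-\infty$. The statement ``bounded $\ge-\infty$'' is then no contradiction at all. What must be ruled out is a $(k+1)$-dimensional subspace on which $\lambda\circ\operatorname{Hess}_u\cE_0$ is \emph{positive} definite. The repair is immediate. The multiplier $-\lambda$ is also abnormal for $u$, and the hypothesis is imposed on every nonzero multiplier. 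So it gives a $(k+1)$-dimensional $V$ on which $(-\lambda)\circ\operatorname{Hess}_u\cE_0$ is negative definite, and with this $V$ your right-hand side tends to $+\infty$. Equivalently, normalize $\lambda_j:=-p_j/|p_j|$, as the paper does.

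\textbf{The order of the kernel correction.} The correction must be built in \emph{before} the dimension count. Suppose $\pi_j$ projects onto a complement of $\im\dd_u\cE_0$ \emph{along} $\im\dd_u\cE_0$. Then for $v\in V_j$ you only know that $\dd_{u_j}\cE_{s_j}v\in\im\dd_u\cE_0$. This vector need not lie in $\dd_{u_j}\cE_{s_j}\bigl((\ker\dd_u\cE_0)^\perp\bigr)$, which is the only subspace on which you have a uniformly bounded inverse. When the rank jumps, the leftover can only be absorbed through directions with vanishing singular values, so no small $w$ need exist.

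A finite-dimensional example shows this. Take $H=\R^3$, $\dd\cE_0(a,b,c)=(a,0)$, and $\dd\cE_{s_j}(a,b,c)=(a+\epsilon b,\ \epsilon a+\epsilon^3c)$ with $\epsilon\to0$, so that $k=1$ and $V=\{a=0\}$. Your $V_j$ is spanned by $(0,1,0)$. But $\ker\dd\cE_{s_j}$ is spanned by $(-\epsilon^2,\epsilon,1)$, so every $w$ with $(0,1,0)+w$ in the kernel has $\|w\|$ close to $1$.

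The fix is to take $\ker\pi_j:=\dd_{u_j}\cE_{s_j}\bigl((\ker\dd_u\cE_0)^\perp\bigr)$, which genuinely depends on $j$. Equivalently, as in the paper, set $a_j(w):=-B_jP\dd_{u_j}\cE_{s_j}(w)$ and apply the count to the linear map $w\mapsto Q\dd_{u_j}\cE_{s_j}(w+a_j(w))$ from $W$ to $(\im\dd_u\cE_0)^\perp$. In the example this selects $w\propto(0,\epsilon,1)$, as it should.

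\textbf{The parameter estimate.} Your joint $(s,z)$ treatment of \eqref{eq:lip2} is a legitimate alternative to the paper's Step~5, provided the parameter ranges over an open set. You can justify the extension of $X^s_i$ to $s<0$, or work on $(0,s_K)$ and pass to $s=0$ by continuity. At a boundary value of $s$ the minimum in $s'$ is only one-sided, and $p_s$ is not controlled. The paper's route is more elementary: it uses the $s$-minimizer as a competitor for $\cE_t$, whose endpoint moves by $O(|t-s|)$ by \eqref{eq:ode-parameter-bound}, and then applies the spatial estimate.
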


\begin{proof}
Choose a compact neighborhood $K^+$ of $K$ with
$K^+\Subset\R^n\setminus\{0\}$. We first control minimizers and
subdifferentials with endpoints in $K^+$.

\emph{1. Continuity of distances.}
By Lemma~\ref{lem:blowup}(ii), we may choose $s_0>0$ and $A>1$ such that
\begin{equation}
 K^+\subset U_s,\qquad d_s(0,z)\leq A-1,\qquad 0\leq s\leq s_0,\quad z\in K^+.
\end{equation}
We decrease $s_0$ so that Lemma~\ref{lem:compact-containment} applies with bound on the controls equal to $A$ and $s_0(A-1)<r_{\mathrm{loc}}$. For $0<s\leq s_0$ and $z\in K^+$, we have
$s\,d_s(0,z)\leq s_0(A-1)<r_{\mathrm{loc}}$, so
Lemma~\ref{lem:blowup}(iii) gives
\begin{equation}
d_s(0,z)=\frac1s d\bigl(q_0,x^{-1}(\delta_s z)\bigr),
\qquad 0<s\leq s_0,\quad z\in K^+.
\end{equation}
This formula gives continuity in $(s,z)$ for $s>0$, while continuity at
$s=0$ follows from Lemma~\ref{lem:blowup}(ii) and continuity of $d_0$.
Thus $(s,z)\mapsto d_s(0,z)$ is continuous on $[0,s_0]\times K^+$.

\emph{2. Existence of minimizers.}
Fix $(s,z)\in[0,s_0]\times K^+$ and take a sequence $(u_k)_k$ such that
$\cE_s(u_k)=z$, $\cJ(u_k)\to f_s(z)$.
Since $2f_s(z)=d_s(0,z)^2\leq(A-1)^2$, we may assume
$\|u_k\|_H\leq A$.
By Lemma~\ref{lem:compact-containment}, the trajectories $\gamma^s_{u_k}$
lie in a common compact set, and are equicontinuous (actually, equi-Hölder with Hölder exponent $1/2$). After extraction, $u_k\rightharpoonup u$
in $H$ and $\gamma_{u_k}^s\to\gamma$ uniformly. Since
$X_i^s(\gamma_{u_k}^s)\to X_i^s(\gamma)$ uniformly, weak convergence of the
controls allows us to pass to the limit in the integral equations, giving
\begin{equation}
 \gamma_{u_k}^s(t)=\int_0^t\sum_{i=1}^m
 u_{k,i}(\tau)X_i^s(\gamma_{u_k}^s(\tau))\,\dd\tau \longrightarrow \int_0^t\sum_{i=1}^m
 u_i(\tau)X_i^s(\gamma(\tau))\,\dd\tau.
\end{equation}
Thus $\gamma=\gamma_u^s$ and $\cE_s(u)=z$.
Weak lower semicontinuity of the $H$ norm yields
$\cJ(u)\leq\liminf_k\cJ(u_k)=f_s(z)$, so $u$ is energy-minimizing for the $s$-structure.

\emph{3. Compactness of minimizers.} Define
\begin{equation}
 \mathfrak M_{s_0}:=
 \bigl\{(s,u)\in[0,s_0]\times H:\cE_s(u)\in K^+,\
 u\text{ is energy-minimizing for the }s\text{-structure}\bigr\}.
\end{equation}
We claim that $\mathfrak M_{s_0}$ is compact in $[0,s_0]\times H$,
with the strong topology on $H$.
Take a sequence $(s_j,u_j)\in\mathfrak M_{s_0}$ and set
$z_j:=\cE_{s_j}(u_j)$. After extraction,
$s_j\to s\in[0,s_0]$ and $z_j\to z\in K^+$.
Since $\|u_j\|_H=d_{s_j}(0,z_j)\leq A-1$,
Lemma~\ref{lem:compact-containment} and the argument of Step~2 give,
after extraction, $u_j\rightharpoonup u$ in $H$ with uniformly
convergent trajectories.
Using also the smooth dependence of the fields on $s$ from
Lemma~\ref{lem:blowup}(i), taking the limit in the integral
equations as above gives $\cE_s(u)=z$.
By the continuity established in Step~1,
\begin{equation}
 d_s(0,z)\leq\|u\|_H
 \leq\liminf_j\|u_j\|_H
 =\lim_jd_{s_j}(0,z_j)=d_s(0,z).
\end{equation}
Thus $u$ is energy-minimizing for the $s$-structure and $\|u_j\|_H\to\|u\|_H$.
Weak convergence and convergence of the norms in $H$ imply
$u_j\to u$ strongly, proving the claim.

\emph{4. Subdifferential bounds and first Lipschitz estimates.}
We claim that, after reducing $s_0>0$, there is $M>0$ such that
\begin{equation}\label{eq:uniform-proximal-bound}
 |p|\leq M\qquad
 0\leq s\leq s_0,\ z\in K^+,\ p\in\partial^-_P f_s(z).
\end{equation}
Otherwise, for all $j\in \mathbb{N}$, we could choose
\begin{equation*}
 0\leq s_j\leq\min\{s_0,1/j\},\qquad
 z_j\in K^+,\qquad p_j\in\partial^-_P f_{s_j}(z_j),\qquad |p_j|>j.
\end{equation*}
Let $u_j$ be energy-minimizing controls with $\cE_{s_j}(u_j)=z_j$.
Since $s_j\to0$, Step~3 gives, after extraction, $u_j\to u$ strongly
in $H$ and $z_j\to z\in K^+$, where $u$ is energy-minimizing for the
limiting nilpotent structure and $\cE_0(u)=z\neq0$.

Let $\varphi_j$ be a $C^2$ support function from below for $f_{s_j}$
at $z_j$, with $\dd_{z_j}\varphi_j=p_j$. In a neighborhood of $u_j$ it holds the inequality
$f_{s_j}(\cE_{s_j}(v))\leq\cJ(v)$, with equality for $v=u_j$. This implies that
$\cJ-\varphi_j\circ\cE_{s_j}$ has a local minimum at $u_j$.
The first- and second-order necessary conditions give
\begin{align*}
 p_j\circ\dd_{u_j}\cE_{s_j}&=\dd_{u_j}\cJ,\\
 \|v\|_H^2-p_j\bigl(\dd_{u_j}^2\cE_{s_j}[v,v]\bigr)&\geq0,
 \qquad v\in\ker\dd_{u_j}\cE_{s_j}.
\end{align*}
Set $\lambda_j:=-p_j/|p_j|$ and extract a subsequence with
$\lambda_j\to\lambda$, $|\lambda|=1$. Lemma~\ref{lem:compact-containment}
and the strong convergence $u_j\to u$ imply
\begin{equation*}
 \dd_{u_j}^{\ell}\cE_{s_j}\longrightarrow\dd_u^{\ell}\cE_0
 \quad\text{in operator norm},\qquad \ell=1,2.
\end{equation*}
Since $(u_j)$ is bounded,
\begin{equation*}
 \lambda_j\circ\dd_{u_j}\cE_{s_j}
 =-\frac{\dd_{u_j}\cJ}{|p_j|}\longrightarrow0\quad\text{in }H^*,
\end{equation*}
so $\lambda\circ\dd_u\cE_0=0$. Therefore, $\lambda$ is an abnormal
multiplier for $u$. Moreover,
\begin{equation}\label{eq:normalized-second-variation}
 \lambda_j\bigl(\dd_{u_j}^2\cE_{s_j}[v,v]\bigr)
 \geq-\frac{\|v\|_H^2}{|p_j|},\qquad  v\in\ker\dd_{u_j}\cE_{s_j}.
\end{equation}

We show that
\begin{equation}\label{eq:limiting-index-bound}
 \operatorname{ind}_-\bigl(\lambda\circ\operatorname{Hess}_u\cE_0\bigr)
 \leq k,\qquad k:=\operatorname{codim}\im\dd_u\cE_0.
\end{equation}
Otherwise, there would be a $(k+1)$-dimensional subspace
$W\subset\ker\dd_u\cE_0$ on which
$q_\lambda(v):=\lambda(\dd_u^2\cE_0[v,v])$ is negative definite.
To use \eqref{eq:normalized-second-variation}, we need vectors in
$\ker\dd_{u_j}\cE_{s_j}$ close to unit vectors of $W$. We give the idea of the proof. We first modify \emph{any} given $w \in W$ introductin a linear correction in the space
$(\ker\dd_u\cE_0)^\perp$ to cancel the component of $\dd_{u_j}\cE_{s_j}(w)$ along
$\im\dd_u\cE_0$. Then, a dimension argument using $\dim W=k+1$ will select a suitable $w$ so that also the remaining $(\im\dd_u\cE_0)^\perp$-component vanishes.

Decompose $\R^n = \im\dd_u\cE_0 \oplus (\im\dd_u\cE_0)^\perp$, and let $P,Q$ be the orthogonal projections onto $\im\dd_u\cE_0,(\im\dd_u\cE_0)^\perp$, respectively.
The restriction $\dd_u\cE_0|_{(\ker\dd_u\cE_0)^\perp}$ is an
isomorphism onto $\im\dd_u\cE_0$, and
\begin{equation*}
 P\dd_{u_j}\cE_{s_j}|_{(\ker\dd_u\cE_0)^\perp}
 \longrightarrow\dd_u\cE_0|_{(\ker\dd_u\cE_0)^\perp}
 \quad\text{in operator norm}.
\end{equation*}
Thus, for large $j$, the inverses
\begin{equation*}
 B_j:=\left(P\dd_{u_j}\cE_{s_j}|_{(\ker\dd_u\cE_0)^\perp}\right)^{-1}
 :\im\dd_u\cE_0\longrightarrow(\ker\dd_u\cE_0)^\perp
\end{equation*}
exist and are uniformly bounded. For $w\in W$, the solution of $P\dd_{u_j}\cE_{s_j}(w+a)=0$ for
$a\in(\ker\dd_u\cE_0)^\perp$ is given by
\begin{equation*}
 a_j(w):=-B_jP\dd_{u_j}\cE_{s_j}(w).
\end{equation*}
Since $P\dd_{u_j}\cE_{s_j}(w)
=P(\dd_{u_j}\cE_{s_j}-\dd_u\cE_0)(w)$ on $W$, the uniform bound
on $B_j$ gives $\|a_j\|_{\mathrm{op}}\to0$ as linear maps from $W$
to $H$.
Set $T_j(w):=Q\dd_{u_j}\cE_{s_j}(w+a_j(w))$.
As $\dim W=k+1>k=\dim (\im\dd_u\cE_0)^\perp$, the linear map $T_j:W\to (\im\dd_u\cE_0)^\perp$
has a nontrivial kernel. Then, we can choose unit vectors $w_j\in\ker T_j$ and set
$v_j:=w_j+a_j(w_j)$. By construction 
\[
P\dd_{u_j}\cE_{s_j}(v_j)= Q\dd_{u_j}\cE_{s_j}(v_j) =0,
\]
so $v_j\in\ker\dd_{u_j}\cE_{s_j}$.
After extraction, $w_j\to w\in W$ with $\|w\|_H=1$, and $v_j\to w$.
Thus
\begin{equation*}
 \lambda_j\bigl(\dd_{u_j}^2\cE_{s_j}[v_j,v_j]\bigr)
 \longrightarrow q_\lambda(w)<0,
\end{equation*}
contradicting \eqref{eq:normalized-second-variation}, whose right-hand
side tends to zero. This proves \eqref{eq:limiting-index-bound}. The
existence of this multiplier implies
$\operatorname{ind}(\gamma_u)\leq k=\operatorname{corank}(\gamma_u)$ for the limit trajectory,
contradicting the hypothesis of Theorem~\ref{thm:main}. Hence \eqref{eq:uniform-proximal-bound} holds.

Lemma~\ref{lem:proximal-approximation} now gives the same bound $M$ for
all Fr\'echet subdifferentials on $\operatorname{int}K^+$: the approximating
proximal points eventually lie in $K^+$, where
\eqref{eq:uniform-proximal-bound} applies.
Choose a compact neighborhood $K'$ of $K$ contained in
$\operatorname{int}K^+$ (this is needed in Step~5 where we will perturb the endpoint). Choose $r>0$ so that
$B(z,2r)\subset\operatorname{int}K^+$ for every $z\in K'$.
Proposition~\ref{prop:subdifferential-lipschitz} on these balls gives
$|f_s(y)-f_s(z)|\leq M|y-z|$ for $y,z\in K'$ with $|y-z|\leq r$,
uniformly in $s\in[0,s_0]$. For $|y-z|>r$, use
$0\leq f_s\leq\tfrac12(A-1)^2$ from Step~1. With
$C_1:=\max\{M,(A-1)^2/(2r)\}$, we obtain
\begin{equation}\label{eq:f-spatial-lipschitz}
 |f_s(y)-f_s(z)|\leq C_1|y-z|
 \qquad 0\leq s\leq s_0,\ y,z\in K'.
\end{equation}

\emph{5. Lipschitz dependence on the parameter.} To prove Lipschitz continuity for $[0,s_0]\ni s\mapsto f_s(z)$, uniformly in $z$, let $u_s$ be energy-minimizing from $0$ to
$z\in K$. Since $\|u_s\|_H=d_s(0,z)\leq A-1$, Lemma~\ref{lem:compact-containment}
implies that $u_s\in\mathcal U_t$ for every $t\in[0,s_0]$. Equation
\eqref{eq:ode-parameter-bound} gives
\begin{equation}
 z':=\cE_t(u_s),\qquad |z'-z|\leq C_2|t-s|,
\end{equation}
with $C_2$ independent of the minimizer. Reduce $s_0$ so that
\begin{equation}
 C_2s_0<
 \operatorname{dist}\bigl(K,\R^n\setminus\operatorname{int}K'\bigr).
\end{equation}
Then $z'\in K'$, and it holds
$f_t(z')\leq\cJ(u_s)=f_s(z)$. By \eqref{eq:f-spatial-lipschitz},
\begin{equation}
 f_t(z)\leq f_t(z')+C_1|z-z'|
 \leq f_s(z)+C_1C_2|t-s|.
\end{equation}
Interchanging $s$ and $t$ proves the second Lipschitz estimate for $f_s$.

\emph{6. Lipschitz estimates for the distance.} Deducing \eqref{eq:lip1}--\eqref{eq:lip2} is now straightforward. Since $(s,z)\mapsto d_s(0,z)$ is continuous on
$[0,s_0]\times K^+$ and $K^+\Subset\R^n\setminus\{0\}$, there is $c_1>0$ such that
\begin{equation}
d_s(0,z)\geq c_1 > 0,\qquad  0\leq s\leq s_0,\quad z\in K^+.
\end{equation}
Thus, for $s,t\in[0,s_0]$ and $y,z\in K$, it holds
\begin{equation*}
    |d_s(0,y)-d_t(0,z)| =\frac{2|f_s(y)-f_t(z)|}{d_s(0,y)+d_t(0,z)} \leq \frac{|f_s(y)-f_t(z)|}{c_1}.
\end{equation*}
Then, \eqref{eq:lip1}--\eqref{eq:lip2} for $d_s$ follow from the corresponding ones for $f_s$ proved above, adjusting the constants appropriately. Set $s_K:=s_0$ and $L_K:=\max\{C_1,C_1C_2\}/c_1$ to conclude.
\end{proof}

The same argument applies for both variable endpoints. We record the result, of independent interest, but since it is not used for the proof of Theorem \ref{thm:main} we only sketch its proof.
\begin{proposition}[Uniform Lipschitz estimates II]\label{prop:two-point-lipschitz}
Assume that every nonconstant abnormal length-minimizing trajectory of
the nilpotent approximation, with arbitrary initial point, has Morse
index strictly larger than its corank. Let
$\Delta:=\{(x,x):x\in\R^n\}$. Then, for every compact set
$\mathcal K\Subset(\R^n\times\R^n)\setminus\Delta$, there exist
$s_{\mathcal K}>0$ and $L_{\mathcal K}>0$ such that
\begin{equation*}
 |d_s(x,y)-d_t(x',y')|
 \leq L_{\mathcal K}\bigl(|x-x'|+|y-y'|+|s-t|\bigr)
\end{equation*}
for all $(x,y),(x',y')\in\mathcal K$ and
$s,t\in[0,s_{\mathcal K}]$.
    \end{proposition}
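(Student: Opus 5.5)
We follow the six steps of the proof of Proposition~\ref{prop:uniform-stability}, now with the initial point also varying. For $x\in\R^n$ and $s\geq0$, let $\cE_s^x$ be the endpoint map of the $X^s$-system with initial point $x$. Set $F_s(x,y):=\tfrac12 d_s(x,y)^2$, so that
\begin{equation}
 F_s(x,y)=\inf\{\cJ(u):\cE^x_s(u)=y\}.
\end{equation}
We regard $F_s$ as a continuous function on a neighborhood of $\mathcal K$ in $\R^{2n}$.

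First, Lemma~\ref{lem:compact-containment} extends verbatim to initial points in a compact set. Gronwall's inequality gives uniform $C^2$ convergence, in operator norm, of $(x,u)\mapsto\cE^x_s(u)$ to $\cE^x_0$ on bounded sets. It also gives the Lipschitz bound
\begin{equation}
 |\gamma^{s,x}_u(\tau)-\gamma^{t,x'}_u(\tau)|\leq C(|x-x'|+|s-t|).
\end{equation}
Continuity of $(s,x,y)\mapsto d_s(x,y)$ follows from Lemma~\ref{lem:blowup}(ii) and (iii). Choosing a compact neighborhood $\mathcal K^+$ of $\mathcal K$ disjoint from $\Delta$ guarantees $d_s\geq c_1>0$ on $\mathcal K^+$. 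Existence and compactness of energy minimizers, with strong convergence in $H$, then follow exactly as in Steps~2--3, now with $(s,x,u)$ in place of $(s,u)$.

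The core is the analogue of Step~4, a uniform bound on proximal subdifferentials $(p_x,p_y)\in\partial^-_P F_s(x,y)$. If $\varphi$ is a $C^2$ support function from below at $(x,y)$, then the map
\begin{equation}
 (x',v)\mapsto\cJ(v)-\varphi(x',\cE^{x'}_s(v))
\end{equation}
has a local minimum at $(x,u)$. To handle the first-order condition in $x$, I would use the time-reversed problem. The reversed control $\check u(t)=-u(1-t)$ steers $y$ to $x$ with the same energy. Hence $(x,y)\mapsto\varphi(x,y)$ is also a support function for $F_s$ read through the reversed endpoint map from $y$. The easier route is to note that $F_s(\cdot,y)$ and $F_s(x,\cdot)$ each satisfy the one-point argument separately. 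Fixing $x$ and varying $y$ gives $p_y\in\partial^-_P F_s(x,\cdot)(y)$, and symmetrically $p_x\in\partial^-_P F_s(\cdot,y)(x)$ via reversed trajectories. So it suffices to bound the $y$-part uniformly in $x$; the $x$-part follows by symmetry of $d_s$.

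For the $y$-part, suppose $|p_{y,j}|\to\infty$ along $s_j\to0$ and $(x_j,y_j)\to(x,y)$ with $x\neq y$. Normalizing gives a limit multiplier $\lambda$ annihilating $\im\dd_u\cE_0^x$, for a nonconstant limit minimizer from $x$. The kernel-correction argument of Step~4 then shows $\operatorname{ind}_-(\lambda\circ\operatorname{Hess}_u\cE^x_0)\leq k$. That argument is the main technical point but carries over unchanged, since it only uses operator-norm convergence of the first two differentials. This contradicts the hypothesis, which is why we now need the index condition at arbitrary initial points.

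Lemma~\ref{lem:proximal-approximation} and Proposition~\ref{prop:subdifferential-lipschitz}, applied on small convex balls in $\R^{2n}$, then give local joint Lipschitz bounds. Boundedness of $F_s$ on $\mathcal K^+$ globalizes them as before. Lipschitz dependence on $s$ follows as in Step~5: we apply a minimizer for $d_s(x,y)$ to the $t$-system from $x$, which moves its endpoint by $O(|s-t|)$. Dividing by $d_s+d_t\geq 2c_1$ transfers all estimates from $F_s$ to $d_s$. I expect the only delicate points to be bookkeeping: keeping minimizers uniformly localized in $U_s$, which is ensured by Lemma~\ref{lem:blowup}(iii) for small $s$, and the symmetric treatment of the initial point.
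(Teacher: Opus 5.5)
Your proposal is correct. It follows the paper's overall structure, but it handles the moving initial point in the key Step~4 differently.

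\textbf{The paper's route.} The paper works with the joint endpoint map $\mathcal F_s(x,u)=(x,\cE_{s,x}(u))$ and the cost $(x,u)\mapsto\cJ(u)$. It computes $\dd_{(x,u)}\mathcal F_0(\xi,v)=(\xi,B\xi+Av)$ and observes the following: the kernel is $\{0\}\times\ker A$, the corank equals that of $A$, the abnormal multipliers are exactly $(-\beta\circ B,\beta)$ with $0\neq\beta\in(\im A)^\perp$, and the Hessian on the kernel is $v\mapsto\beta(\dd_u^2\cE_{0,x}[v,v])$. So the index hypothesis transfers verbatim, and the one-point argument runs on $\R^{2n}$ in one pass.

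\textbf{Your route.} You slice instead. Restricting a $C^2$ lower support of $F_s$ at $(x,y)$ to $\{x\}\times\R^n$ and to $\R^n\times\{y\}$ gives $p_y\in\partial^-_P F_s(x,\cdot)(y)$ and $p_x\in\partial^-_P F_s(\cdot,y)(x)$. The symmetry $F_s(x,y)=F_s(y,x)$ then turns the $x$-part into a $y$-part at the swapped point $(y,x)$. The key step becomes literally the one-point argument with a base point converging along the sequence, and you avoid the linear algebra of the joint map.

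\textbf{Trade-offs.} Your route relies on the symmetry of $d_s$, which the paper's joint argument does not use. You also need $\mathcal K^+$ to be invariant under $(x,y)\mapsto(y,x)$, so that the uniform $y$-part bound covers the swapped points. Replace $\mathcal K^+$ by its union with its image under the swap; this set is still compact and disjoint from $\Delta$.

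\textbf{Minor point.} Uniform localization of trajectories in $U_s$ comes from Lemma~\ref{lem:compact-containment}, made uniform in the initial point. Lemma~\ref{lem:blowup}(iii) is not what gives it; that lemma is only needed for the continuity of $(s,x,y)\mapsto d_s(x,y)$.
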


\begin{proof}
For initial points $x$ in a fixed compact set, the argument and estimates in Lemma~\ref{lem:compact-containment}
remain uniform in $x$. We also obtain uniform
$C^2$ convergence of the joint endpoint maps
$\mathcal F_s(x,u):=(x,\cE_{s,x}(u))$ on compact sets of initial points
and bounded sets of controls.

Write $A:=\dd_u\cE_{0,x}$ and $B:=\dd_x\cE_{0,x}(u)$. Then
$\dd_{(x,u)}\mathcal F_0(\xi,v)=(\xi,B\xi+Av)$, so its kernel is
$\{0\}\times\ker A$ and its corank equals that of $A$.
Its abnormal multipliers are exactly
$(-\beta\circ B,\beta)$, where $0\neq\beta\in(\im A)^\perp$.
On the kernel, the corresponding Hessian is
$v\mapsto\beta(\dd_u^2\cE_{0,x}[v,v])$.
Thus the index hypothesis is unchanged for the joint endpoint map.

We can then apply the argument of the proof of Proposition~\ref{prop:uniform-stability} to
$f_s(x,y):=\tfrac12d_s(x,y)^2$, for $x,y$ in compact neighborhoods of
$\mathcal K$ disjoint from $\Delta$ and the cost
$(x,u)\mapsto\cJ(u)$. 
\end{proof}

\section{Proof of the main theorems}\label{sec:radial}

For the assertion about open balls in Theorem \ref{thm:main}, we use the following lemma. It is an anisotropic version of the well-known fact that any open
star-shaped subset of $\R^n$ is diffeomorphic to $\R^n$, and is proved  in Appendix~\ref{app:smooth-straightening}.

\begin{lemma}
\label{lem:smooth-straightening}
Let $D\subset\R^n$ be an open, dilation-star-shaped neighborhood of the origin, namely
\begin{equation}
z\in D,\quad 0\le t\le1
\quad\Longrightarrow\quad
\delta_t z\in D.
\end{equation}
Then $D$ is diffeomorphic to $\R^n$.
\end{lemma}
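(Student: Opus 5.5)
The plan is to reduce to the standard fact that an open star-shaped set is diffeomorphic to $\R^n$, by building a smooth, complete, radially expanding vector field adapted to $D$. Let $\xi$ be the generator of the dilations: $\delta_{e^\tau}=\exp(\tau\xi)$, so
\begin{equation}
 \xi(z)=\sum_j w_jz_j\partial_{z_j}.
\end{equation}
Fix the homogeneous quasi-norm $\rho(z):=\bigl(\sum_j|z_j|^{2N/w_j}\bigr)^{1/(2N)}$, where $N$ is a common multiple of the $w_j$. Then $\rho$ is smooth on $\R^n\setminus\{0\}$ and $\rho(\delta_tz)=t\rho(z)$. Each dilation orbit $t\mapsto\delta_t\theta$, $\theta\in\Sigma:=\{\rho=1\}$, meets $\Sigma$ exactly once. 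Hence $\Phi:(0,\infty)\times\Sigma\to\R^n\setminus\{0\}$, $(t,\theta)\mapsto\delta_t\theta$, is a diffeomorphism; it plays the role of polar coordinates.

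By dilation-star-shapedness, $D=\{0\}\cup\{\delta_t\theta: 0<t<R(\theta)\}$ with
\begin{equation}
 R(\theta):=\sup\{t:\delta_t\theta\in D\}\in(0,\infty].
\end{equation}
Since $D$ is open, $R$ is lower semicontinuous and bounded below by some $r_0>0$, because $D$ contains a neighborhood of $0$. $R$ need not be continuous, so one cannot simply rescale radially by $R$. I will follow the classical proof for star-shaped sets instead.

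First choose a smooth function $g:\R^n\to[0,1]$ with $g^{-1}(0)=\R^n\setminus D$; such a function exists for any closed set. Set $V:=g\,\xi$, possibly multiplied by a factor that makes it complete, e.g. $V:=g\,\xi/(1+|\xi|^2)$. The field $V$ vanishes only at $0$ and outside $D$. Along each ray, $V$ is a positive multiple of $\xi$ on $D\setminus\{0\}$, so its flow preserves each orbit $\{\delta_t\theta\}$ and moves monotonically outward, without reaching $\partial D$ in finite time.

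The next step is to reparametrize. For $\theta\in\Sigma$ and $z=\delta_t\theta\in D$ with $t\le r_0/2$, take the $V$-flow time needed to reach $z$ from the sphere $\delta_{r_0/2}\Sigma$. This is a smooth function on $D\setminus\{0\}$. Adjust $V$ near the origin so that it equals $\xi$ on $\{\rho<r_0/2\}$, where the dilation structure is explicit. Then define $F:D\to\R^n$ by sending $\Phi_V^\tau(\delta_{r_0/2}\theta)$ to $\delta_{(r_0/2)e^{\tau}}\theta$. Here $\tau\in\R$ is the flow time, using $\xi$ on $\R^n$.

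Near $0$, $F$ is the identity, so it is smooth there. Away from $0$, $F$ is smooth because flows depend smoothly on initial data, and it is a diffeomorphism onto its image. The image is all of $\R^n$ because the outward flow time to $\partial D$ is infinite: $g\to0$ there, and if $R(\theta)=\infty$ the flow is complete anyway.

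\textbf{Main obstacle.} The delicate point is showing that the exit time is infinite, and that the map is surjective onto each full ray. Surjectivity requires $\int dt/(g\cdot\text{speed})=\infty$ along each ray as one approaches $R(\theta)$. I expect this to follow by choosing $g$ to be Lipschitz-comparable to the distance to $\R^n\setminus D$: near the boundary, $g\le C\,\mathrm{dist}$, which makes the integral diverge logarithmically. Smoothness of $F^{-1}$ follows because $V$ never vanishes on $D\setminus\{0\}$ and is transverse to the spheres $\rho=\mathrm{const}$.
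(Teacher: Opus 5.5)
Your argument is correct and rests on the same mechanism as the paper's proof. Each dilation orbit is reparametrized by integrating the reciprocal of a smooth function whose zero set is exactly $\R^n\setminus D$, and surjectivity comes from the divergence of that integral at the boundary.

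The difference is in packaging. The paper avoids flows and writes the map in closed form, $F(z)=\delta_{\sigma(z)}z$ with $\sigma(z)=\int_0^1\dd t/\varphi(\delta_t z)$, so that $F(\delta_r\theta)=\delta_{T_\theta(r)}\theta$ with $T_\theta(r)=\int_0^r\dd u/\varphi(\delta_u\theta)$. Smoothness on all of $D$, origin included, then follows by differentiating under the integral sign, and $DF(0)=\operatorname{diag}(\sigma(0)^{w_1},\ldots,\sigma(0)^{w_n})$ is invertible. So the paper needs no homogeneous quasi-norm, transversal hypersurface, completeness factor, or cutoff near $0$. Your version is the classical flow proof for star-shaped sets, adapted to $\xi$. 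It makes the diffeomorphism property on $D\setminus\{0\}$ transparent through the flow-box argument, at the price of those extra devices.

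What you call the main obstacle is automatic and needs no special choice of $g$. For any smooth $g$, the function $t\mapsto g(\delta_t\theta)$ vanishes at $t=R(\theta)<\infty$, so $g(\delta_t\theta)\le C_\theta(R(\theta)-t)$ nearby; this is exactly the paper's estimate. In any case, uniqueness for the ODE of the smooth field $V$ already forbids reaching its zero set in finite time, as you noted earlier.

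One step is left implicit: the flow line through $\delta_{r_0/2}\theta$ must exhaust the whole segment $\{\delta_t\theta:0<t<R(\theta)\}$, so that $F$ is defined on all of $D$. This holds because if its $t$-coordinate stalled at some $t^*<R(\theta)$, the orbit would converge to a zero of $V$ in $D\setminus\{0\}$, and there is none.

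Finally, your two smoothness justifications are swapped. Smooth dependence of flows on data gives smoothness of $F^{-1}$. It is the transversality of $V$ to $\{\rho=r_0/2\}$ that makes the inverse of $(\tau,\theta)\mapsto\Phi_V^\tau(\delta_{r_0/2}\theta)$, and hence $F$, smooth.
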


\subsection*{Proof of Theorem~\ref{thm:main}}
Fix $\varepsilon>0$, that we will reduce throughout the proof.
For $\varepsilon\leq r_{\mathrm{loc}}$, Lemma~\ref{lem:localization} and
\eqref{eq:local-distance-rescaling} show that the diffeomorphism
\begin{equation}
 \Phi_\varepsilon:U\longrightarrow U_\varepsilon,
 \qquad \Phi_\varepsilon(q)=\delta_{1/\varepsilon}x(q),
\end{equation}
maps the open and closed balls of radius $\varepsilon$ for the original
sub-Riemannian structure onto the corresponding unit balls for
$d_\varepsilon$ on $U_\varepsilon$:
\begin{equation}
 D_\varepsilon:=\{z\in U_\varepsilon:d_\varepsilon(0,z)<1\},
 \qquad
 \overline D_\varepsilon:=\{z\in U_\varepsilon:d_\varepsilon(0,z)\leq1\},
\end{equation}
and the sphere $S_{q_0}(\varepsilon)$ onto $\{d_\varepsilon(0,\cdot)=1\}$.
We study these unit balls and spheres.

Along each ray $r\mapsto\delta_r\theta$, $\theta\in S^{n-1}$, we will
show that the distance $d_\varepsilon$ grows strictly from $0$ to a value greater than
$1$. Thus the unit $d_\varepsilon$-sphere will be a continuous radial graph, equal to
$\partial D_\varepsilon$.
Every nonzero point lies on exactly one such ray.

Proposition~\ref{prop:uniform-stability}, applied to the compact set $S^{n-1}=\{z\in \R^n : |z|=1\}$, gives
$s_*>0$ and $L>0$ such that $S^{n-1}\subset U_s$ for $0\leq s\leq s_*$ and
\begin{equation}\label{eq:sphere-parameter-lipschitz}
 |d_s(0,\theta)-d_t(0,\theta)|\leq L|s-t|,
 \qquad \theta\in S^{n-1},\ 0\leq s,t\leq s_*.
\end{equation}
By Lemma~\ref{lem:blowup}(ii), after decreasing $s_*$, we also have
\begin{equation}\label{eq:sphere-lower-bound}
 d_s(0,\theta)\geq c:=\tfrac12\min_{S^{n-1}}d_0(0,\cdot)>0,
 \qquad \theta\in S^{n-1},\ 0\leq s\leq s_*.
\end{equation}

To enclose the unit $d_\varepsilon$-ball, we seek a reference set
$\delta_R(S^{n-1})$, diffeomorphic to $S^{n-1}$, on which $d_\varepsilon>1$. We choose $R>1/c$.
With $R$ fixed, reduce $\varepsilon$ so that $\varepsilon R\leq s_*$.
For $0<r\leq R$, we then have $S^{n-1}\subset U_{\varepsilon r}$;
since $\delta_rU_{\varepsilon r}=U_\varepsilon$, the entire region
\begin{equation}
 \delta_R\overline B^n
 =\{\delta_r\theta:0\leq r\leq R,\ \theta\in S^{n-1}\}
 \subset U_\varepsilon
\end{equation}
lies in the domain of $d_\varepsilon$. Moreover,
\eqref{eq:two-parameter-scaling} and \eqref{eq:sphere-lower-bound} give
\begin{equation}
 d_\varepsilon(0,\delta_R\theta)
 =R\,d_{\varepsilon R}(0,\theta)\geq Rc>1.
\end{equation}
Every horizontal curve $\gamma$ for the $\varepsilon$-structure in $U_\varepsilon$ from $0$ to a point
outside $\delta_R B^n$ meets $\delta_R S^{n-1}$. Its length is therefore at least $Rc$.
Taking infima gives $\overline D_\varepsilon\subset\delta_R B^n$.

For $0<r_1<r_2\leq R$, \eqref{eq:two-parameter-scaling} and
\eqref{eq:sphere-parameter-lipschitz}--\eqref{eq:sphere-lower-bound} yield
\begin{equation}\label{eq:monotonicity}
\begin{aligned}
 d_\varepsilon(0,\delta_{r_2}\theta)
 -d_\varepsilon(0,\delta_{r_1}\theta)
 & =(r_2-r_1)d_{\varepsilon r_2}(0,\theta)
 +r_1\bigl(d_{\varepsilon r_2}(0,\theta)
          -d_{\varepsilon r_1}(0,\theta)\bigr)\\
 &\geq(r_2-r_1)(c-LR\varepsilon).
\end{aligned}
\end{equation}
Thus requiring $LR\varepsilon<c$ makes
$r\mapsto d_\varepsilon(0,\delta_r\theta)$ strictly increasing on
$[0,R]$, for all $\theta$.
By continuity and its values at $0$ and $R$, there is a unique
$r_\varepsilon(\theta)\in(0,R)$ satisfying
\begin{equation}\label{eq:radial-root}
 d_\varepsilon\bigl(0,\delta_{r_\varepsilon(\theta)}\theta\bigr)=1.
\end{equation}
The function $r_\varepsilon :S^{n-1}\to (0,R)$ is continuous: if $\theta_j\to\theta$,
every limit point of $r_\varepsilon(\theta_j)$ in $[0,R]$ satisfies
\eqref{eq:radial-root}, hence it is equal to $r_\varepsilon(\theta)$.
Strict monotonicity now gives
\begin{align}
 D_\varepsilon
 &=\{\delta_r\theta:\theta\in S^{n-1},\
                         0\leq r<r_\varepsilon(\theta)\},
                         \label{eq:open-radial-description}\\
 \overline D_\varepsilon
 &=\{\delta_r\theta:\theta\in S^{n-1},\
                         0\leq r\leq r_\varepsilon(\theta)\},\notag\\
 \partial D_\varepsilon
 &=\{\delta_{r_\varepsilon(\theta)}\theta:\theta\in S^{n-1}\}
   =\{d_\varepsilon(0,\cdot)=1\}.\notag
\end{align}
In particular, $\overline D_\varepsilon$ is the closure of
$D_\varepsilon$.
The map $H_\varepsilon:\R^n\to\R^n$ defined by
\begin{equation}
 H_\varepsilon(z) := \begin{cases}
0, & \text{if $z=0$},\\
\delta_{t r_\varepsilon(\theta)}\theta, & \text{if $z= t\theta\neq 0$}.    
\end{cases}
\end{equation}
is a continuous bijection from $\overline B^n$ onto
$\overline D_\varepsilon$ (continuity at $0$ follows from the fact that
$r_\varepsilon \in (0,R)$ is bounded). By compactness, $H_\varepsilon$ is a homeomorphism of pairs
$(\overline B^n,S^{n-1})\to
(\overline D_\varepsilon,\partial D_\varepsilon)$.
Composing with $\Phi_\varepsilon^{-1}$ yields the required homeomorphism
with $(\overline B_{q_0}(\varepsilon),S_{q_0}(\varepsilon))$.

Finally, by definition, $D_\varepsilon$ is
dilation-star-shaped, so Lemma~\ref{lem:smooth-straightening} gives a diffeomorphism
$D_\varepsilon\simeq\R^n\simeq B^n$.
Composing with $\Phi_\varepsilon^{-1}$ we obtain a diffeomorphism with
$B_{q_0}(\varepsilon)$.
The finitely many upper bounds on $\varepsilon$ can be satisfied by choosing $0<\varepsilon\leq \varepsilon_*$ sufficiently small. \hfill \qed

\subsection*{Proof of Theorem~\ref{thm:lipschitz-spheres}}
Use the notation of the previous proof and reduce $\varepsilon$ so that
$c-LR\varepsilon\geq c/2$. By \eqref{eq:two-parameter-scaling},
$1=r_\varepsilon(\theta)d_{\varepsilon r_\varepsilon(\theta)}(0,\theta)$;
the uniform upper bound in \eqref{eq:sphere-parameter-lipschitz} therefore
gives $r_\varepsilon(\theta)\geq a>0$, independently of $\varepsilon$.
Set $K:=\{\delta_r\theta:\theta\in S^{n-1},\ a\leq r\leq R\}
\Subset\R^n\setminus\{0\}$ and further require $\varepsilon\leq s_K$.
The radial monotonicity estimate \eqref{eq:monotonicity} and
Proposition~\ref{prop:uniform-stability} give
\begin{align}
 \frac c2\,|r_\varepsilon(\theta)-r_\varepsilon(\eta)|
& \leq\bigl|d_\varepsilon(0,\delta_{r_\varepsilon(\theta)}\theta)
            -d_\varepsilon(0,\delta_{r_\varepsilon(\eta)}\theta)\bigr|\\
            & =\bigl|d_\varepsilon(0,\delta_{r_\varepsilon(\eta)}\eta)
            -d_\varepsilon(0,\delta_{r_\varepsilon(\eta)}\theta)\bigr|\leq C|\theta-\eta|, \qquad \theta,\eta\in S^{n-1},
\end{align}
where $C=L_K\max_{a\leq r\leq R}\|\delta_r\|$ does not depend on
$\varepsilon$.

Hence $r_\varepsilon$ and
$\theta\mapsto\delta_{r_\varepsilon(\theta)}\theta$ are uniformly
Lipschitz. The inverse parametrizations are restrictions of the smooth
projection along dilation orbits onto $S^{n-1}$, which is Lipschitz on
$K$, so the parametrizations are uniformly bi-Lipschitz.
In the smooth coordinates $(\theta,r)\mapsto\delta_r\theta$, the spheres
are locally Lipschitz graphs $r=r_\varepsilon(\theta)$, hence embedded
Lipschitz hypersurfaces. The latter property passes to the original
spheres through the diffeomorphism $\Phi_\varepsilon^{-1}$. \hfill \qed

\appendix

\section{Some nonsmooth analysis facts}
\label{app:nonsmooth}

We give elementary proofs of the two standard facts stated
Section~\ref{sec:subdifferentials}.

\subsection*{Proof of Lemma~\ref{lem:proximal-approximation}}
Choose a $C^1$ support function $\varphi$ at $x$ with
$\dd_x\varphi=p$, and a closed Euclidean ball $D:=\overline B(x,r)\subset O$
on a neighborhood of which $\varphi$ is defined and $f\geq\varphi$.
Replacing $\varphi(y)$ by $\varphi(y)-|y-x|^2$, if necessary, we may
assume that $x$ is the unique contact point between $f$ and $\varphi$
on $D$, without changing $\dd_x\varphi$.

By mollification, choose smooth $\varphi_j$ converging to $\varphi$
in $C^1(D)$. Let $y_j$ minimize
\begin{equation*}
 f(y)-\varphi_j(y),\qquad y\in D.
\end{equation*}
These functions converge uniformly to $f-\varphi$, whose unique
minimum on $D$ is at $x$. Compactness therefore gives $y_j\to x$,
so $y_j\in\operatorname{int}D$ for large $j$. Choose $c_j\in \R$ so that
\begin{equation*}
 \psi_j(y):=\varphi_j(y)+c_j,
 \qquad \psi_j(y_j)=f(y_j).
\end{equation*}
By construction, $\psi_j\leq f$ on $D$, so
$q_j:=\dd_{y_j}\psi_j\in\partial^-_P f(y_j)$.
The $C^1$ convergence gives $q_j\to\dd_x\varphi=p$, so
$(y_j,q_j)\to(x,p)$, as required. \hfill \qed

\subsection*{Proof of Proposition~\ref{prop:subdifferential-lipschitz}}
If $f$ is $L$-Lipschitz and $\varphi$ is a $C^1$ support function
at $x$, then, for every unit vector $e$ and small $t>0$,
\begin{equation*}
 \varphi(x+te)-\varphi(x)\leq f(x+te)-f(x)\leq Lt.
\end{equation*}
Dividing by $t$ and taking the supremum over $e$ gives
$|\dd_x\varphi|\leq L$.

Conversely, suppose that all Fr\'echet subdifferentials on $C$ have norm
at most $L$. Fix distinct $a,b\in C$. By convexity, the segment
$S:=[a,b]$ lies in $C$; choose $\rho>0$ so that the compact tube
$D:=\{y:\operatorname{dist}(y,S)\leq\rho\}$ is contained in $C$.
For $\varepsilon>0$ and $x\in S$, set
\begin{equation*}
 f_\varepsilon(x):=
 \min_{y\in D}\left\{f(y)+\frac{|x-y|^2}{2\varepsilon}\right\}.
\end{equation*}
Every minimizer $y_x$ satisfies
\begin{equation*}
 |x-y_x|^2\leq2\varepsilon\left(\max_D f-\min_D f\right),
\end{equation*}
by comparison with $y=x$. Thus, for small $\varepsilon$, all such
minimizers lie in $\operatorname{int}D$, uniformly for $x\in S$.
The smooth function
$y\mapsto f_\varepsilon(x)-|x-y|^2/(2\varepsilon)$ is a support function from below for $f$
 at $y_x$. Using the Euclidean identification of vectors and covectors, we obtain
\begin{equation*}
 p_x:=\frac{x-y_x}{\varepsilon}\in\partial^-_P f(y_x)
       \subseteq\partial^-f(y_x),\qquad |p_x|\leq L.
\end{equation*}
Using $y_x$ as a competitor at $x'\in S$ gives
\begin{equation*}
 f_\varepsilon(x')-f_\varepsilon(x)
 \leq p_x(x'-x)+\frac{|x'-x|^2}{2\varepsilon}
 \leq L|x'-x|+\frac{|x'-x|^2}{2\varepsilon}.
\end{equation*}
Interchanging $x,x'$ and summing over a partition of $S$ into $N$
equal segments yields
\begin{equation*}
 |f_\varepsilon(b)-f_\varepsilon(a)|
 \leq L|b-a|+\frac{|b-a|^2}{2\varepsilon N}.
\end{equation*}
First let $N\to\infty$, and then let $\varepsilon\downarrow0$. By uniform continuity of $f$ on $D$ and the bound on $|x-y_x|$, we obtain that $f_\varepsilon\to f$ uniformly on $S$. This proves
$|f(b)-f(a)|\leq L|b-a|$. \hfill \qed

\section{Smoothness of dilation-star-shaped sets}
\label{app:smooth-straightening}

\subsection*{Proof of Lemma~\ref{lem:smooth-straightening}}
Choose $\psi\in C^\infty(\R^n,[0,\infty))$ with
$\{\psi>0\}=D$, and set $\varphi:=\psi/(1+\psi)$.
Thus $0<\varphi\le1$ on $D$. Define
\[
\sigma(z):=\int_0^1\frac{\dd t}{\varphi(\delta_tz)},
\qquad F(z):=\delta_{\sigma(z)}z.
\]
Since $D$ is dilation-star-shaped, $\sigma$ and $F$ are smooth on $D$.

Every $z\ne0$ is uniquely written as $z=\delta_r\theta$, where
$r>0$ and $\theta\in S^{n-1}$. For each $\theta$, the intersection
of $D$ with this dilation ray is $0\le r<r_D(\theta)$, for some
$r_D(\theta)\in(0,\infty]$. Along the ray,
\[
F(\delta_r\theta)=\delta_{T_\theta(r)}\theta,
\qquad
T_\theta(r):=\int_0^r\frac{\dd u}{\varphi(\delta_u\theta)},
\qquad T_\theta'(r)>0.
\]
If $r_D(\theta)<\infty$, then
$\varphi(\delta_{r_D(\theta)}\theta)=0$. By smoothness we obtain
$\varphi(\delta_r\theta)\le C_\theta(r_D(\theta)-r)$
near $r_D(\theta)$. Hence $T_\theta(r)\to\infty$ as
$r\uparrow r_D(\theta)$. If $r_D(\theta)=\infty$,
the bound $\varphi\le1$ gives $T_\theta(r)\ge r$, with the same conclusion.

Thus $F$ is a bijective map of each dilation ray in $D$ onto the full
corresponding ray in $\R^n$. Since $T_\theta'(r)>0$, $F$ is a local
diffeomorphism away from the origin. At the origin,
\[
DF(0)=\operatorname{diag}\bigl(\sigma(0)^{w_1},\ldots,
\sigma(0)^{w_n}\bigr)
\]
is invertible. Therefore $F:D\to\R^n$ is a diffeomorphism. \hfill \qed

\bibliographystyle{abbrv}
\bibliography{bibliography}

\end{document}